\newtheorem{remark}{Remark}[section]
\newtheorem{lemma}[remark]{Lemma}
\newtheorem{theorem}[remark]{Theorem}
\newtheorem{proposition}[remark]{Proposition}
\newtheorem{corollary}[remark]{Corollary}
\newtheorem{conjecture}[remark]{Conjecture}
\title{Protection of graphs with emphasis on Cartesian product graphs}
\author{Magdalena Valveny, Juan Alberto Rodr\'{\i}guez-Vel\'{a}zquez\\
{\small Universitat Rovira i Virgili }\\{\small Departament d'Enginyeria Inform\`atica i Matem\`atiques } \\  {\small Av. Pa\"{\i}sos Catalans 26, 43007 Tarragona, Spain.} \\{\small
 magdalena.valveny\@@estudiants.urv.cat, juanalberto.rodriguez\@@urv.cat}
}
\date{ }
\begin{document}
\maketitle

\begin{abstract}
In this paper we study the weak Roman domination number and the secure domination number of a graph. In particular, we obtain general bounds on these two parameters and, as a consequence of the study, we derive new inequalities of Nordhaus-Gaddum type involving secure domination and weak Roman domination. Furthermore,  the particular case of  Cartesian product graphs is considered. 
\end{abstract}

{\it Keywords}:
 Weak Roman domination; Domination in graphs; Secure domination; Cartesian product.

%\MSC[2010] 05C69  \sep 05C76

\section{Introduction}

The following approach to protection of a graph was described by Cockay\-ne et al.\  
\cite{MR2137919}. Suppose that one or more guards are stationed at some of the vertices of a simple graph $G$ and that a guard at a vertex can deal with a problem at any vertex in its closed neighbourhood. Consider a function $f: V(G)\longrightarrow \{0,1,2,\dots\}$ where $f(v)$  is the number of guards at $v$,  and let $V_i=\{v\in V(G):\; f(v)=i\}$ for every $i\in \{0,1,2,\dots\}$. We will identify $f$ with the partition of $V(G)$ induced by $f$ and write $f(V_0,V_1, \dots).$ The weight of $f$ is defined to be $w(f)=\sum_{v\in V(G)}f(v)=\sum_ii|V_i|$. A vertex $v\in V(G)$ is \textit{undefended} with respect to  $f$ if  $f(v)=0$ and $f(u)=0$ for every vertex $u$ adjacent to $v$. We say that $G$ is \textit{protected} under the function $f$  if  $f$ has no undefended vertices,   \emph{ i.e.}, $G$ is protected if there is at least one guard available to handle a problem at any vertex.  We now define the four particular subclasses of protected graphs considered in ~\cite{MR2137919}. The functions in each subclass protect the graph according to a certain strategy.

\begin{itemize}
\item We say that $f(V_0,V_1)$ is a \emph{dominating function} (DF) if $G$ is protected under~$f$. Obviously, $f(V_0,V_1)$  is a DF if and only if $V_1$ is a dominating set.  The \emph{domination number}, denoted by $\gamma(G)$ is the minimum cardinality among all dominating sets of $G$.
%, \emph{i.e., } 
%$$\gamma(G)=\min\{w(f):\, f \text{ is a DF on } G\}.$$
This method  of protection has been  studied extensively \cite{Haynes1998a,Haynes1998}.

\item A \textit{Roman dominating function} (RDF) is a  function $f(V_0,V_1,V_2)$ such that  for every $v\in V_0$ there exists a vertex $u\in V_2$ which is adjacent to $v$. The \textit{Roman domination number}, denoted by  $\gamma_R(G)$, is the minimum weight among all  Roman dominating functions on $G$.  
This concept of protection has historical motivation \cite{Stewart1999} and was formally proposed by Cockayne et al.\ in \cite{Cockayne2004}.

\item A \textit{weak Roman dominating function} (WRDF) is a  function $f(V_0,V_1,V_2)$ such that  for every $v$ with $f(v)=0$ there exists a vertex $u$ adjacent to $v$ such that $f(u)\in  \{1,2\}$ and the function $f': V(G)\longrightarrow \{0,1,2\}$ defined by $f'(v)=1$, $f'(u)=f(u)-1$ and $f'(z)=f(z)$ for every $z\in V(G)\setminus\{u,v\}$, has no undefended vertices. The \textit{weak Roman domination number}, denoted by  $\gamma_r(G)$, is the minimum weight among all weak Roman dominating functions on $G$.
%, \textit{i.e.}, $$\gamma_r(G)=\min\{w(f):\, f \text{ is a WRDF on } G\}.$$
A WRDF of weight  $\gamma_r(G)$ is called a $\gamma_r(G)$-function.  For instance, for the tree shown in Figure \ref{FigWeakRoman}, on the left,  a $\gamma_r(G)$-function can place $2$ guards at the vertex of degree three and one guard at the other black-coloured vertex. This concept of protection was introduced  by Henning and Hedetniemi \cite{MR1991720} and studied further in \cite{MR3258160,Cockayne2003,Valveny2017}.

\item  A \textit{secure dominating function} is a WRDF function $f(V_0,V_1,V_2)$ in which $V_2=\emptyset$. In this case, it is convenient to define this concept of save graph by the properties of $V_1$. Obviously $f(V_0,V_1)$ is a secure dominating function if and only if $V_1$ is a dominating set and for every $v\in V_0$ there exists $u\in V_1$ which  is adjacent to $v$ and  $(V_1\setminus \{u\})\cup \{v\}$ is a dominating set. In such a case, $V_1$ is said to be a \textit{secure dominating set}. The \emph{secure domination number}, denoted by $\gamma_s(G)$,  is the minimum cardinality among all secure dominating sets. A secure dominating  function of weight $\gamma_s(G)$ is called a $\gamma_s(G)$-function. Analogously, a secure dominating set of cardinality $\gamma_s(G)$ is called a $\gamma_s(G)$-set.
This concept of protection was introduced  by Cockayne et al.\ in \cite{MR2137919}, and studied further in \cite{MR3355313,MR2529132,MR3258160,Cockayne2003,MR2477230}.
\end{itemize}

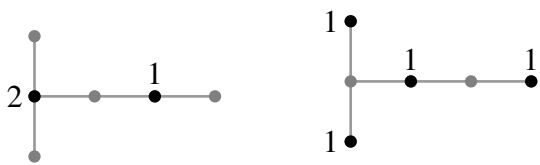
\begin{figure}[h]
\begin{center}
\begin{tikzpicture}
[line width=1pt, scale=0.8]

\coordinate (V1) at (1,0);
\coordinate (V2) at (2,0);
\coordinate (V3) at (3,0);
\coordinate (V4) at (4,0);
\coordinate (V5) at (1,-1);
\coordinate (V6) at (1,1);

\draw[black!40]  (V1)--(V2)--(V3)--(V4);
\draw[black!40]  (V1)--(V5);
\draw[black!40]  (V1)--(V6);

\foreach \number in {1,...,6}{
\filldraw[gray]  (V\number) circle (0.08cm);
}
\foreach \number in {1,3}{
\filldraw[black]  (V\number) circle (0.08cm);
}

\node [left] at (1,0) {$2$};
\node [above] at (3,0) {$1$};
\end{tikzpicture}
\hspace{1cm}
\begin{tikzpicture}
[line width=1pt, scale=0.8]

\coordinate (V1) at (1,0);
\coordinate (V2) at (2,0);
\coordinate (V3) at (3,0);
\coordinate (V4) at (4,0);
\coordinate (V5) at (1,-1);
\coordinate (V6) at (1,1);

\draw[black!40]  (V1)--(V2)--(V3)--(V4);
\draw[black!40]  (V1)--(V5);
\draw[black!40]  (V1)--(V6);

\foreach \number in {1,...,6}{
\filldraw[gray]  (V\number) circle (0.08cm);
}
\foreach \number in {2,4,5,6}{
\filldraw[black]  (V\number) circle (0.08cm);
}

\node [left] at (1,1) {$1$};
\node [left] at (1,-1) {$1$};
\node [above] at (4,0) {$1$};
\node [above] at (2,0) {$1$};
\end{tikzpicture}
\end{center}
\caption{Two placements of guards which correspond  to  two different weak Roman dominating functions on the same tree. Notice that $2=\gamma(G)<\gamma_r(G)<\gamma_s(G)=4$.}
\label{FigWeakRoman} 
\end{figure}

The problem of computing $\gamma_r(G)$ is NP-hard, even when restricted to bipartite or chordal graphs \cite{MR1991720}, and the problem of computing $\gamma_s(G)$ is also NP-hard, even when restricted to split graphs \cite{MR3355313}. This suggests finding
the weak Roman domination number and the secure domination number  for special classes of graphs or obtaining good bounds on these invariants.  This is precisely the aim of this work.   The remainder of the paper is structured as follows. Section \ref{SectionBasicWRDN} is devoted to obtain general bound on $\gamma_r(G)$ and $\gamma_s(G)$ in terms of several invariants of $G$. As a consequence of the study we derive new inequalities of Nordhaus-Gaddum type involving secure domination and weak Roman domination. Finally, in Section \ref{SectionCartesianProduct} we restrict our study to the particular case of  Cartesian product graphs.

Throughout the paper, we will use the notation $K_t$, $K_{1,t-1}$, $C_t$, $N_t$ and $P_t$ for complete graphs, star graphs, cycle graphs, empty graphs and path graphs of order $t$, respectively. We use the notation  $G \cong H$ if $G$ and $H$ are isomorphic graphs. For a vertex $v$ of a graph $G$, $N(v)$ will denote the set of neighbours or \emph{open neighbourhood} of $v$ in $G$. The \emph{closed neighbourhood}, denoted by $N[v]$, equals $N(v) \cup \{v\}$.  
We denote by $\delta(v)=|N(v)|$ the degree of vertex $v$, as well as $\delta(G) =\min_{v \in V(G)}\{\delta(v)\}$,  $\Delta(G) =\max_{v \in V(G)}\{\delta(v)\}$ and $n(G)=|V(G)|$.    For the remainder of the paper, definitions will be introduced whenever a concept is needed.

\section{General bounds}\label{SectionBasicWRDN}

To begin this section we would emphasize the following inequality chains.

\begin{proposition}{\rm \cite{MR2137919}} \label{DominationChain1}
The following inequalities hold for any graph $G$.
 \begin{enumerate}[{\rm (i)}]
\item $\gamma(G) \le \gamma_r(G)\le \gamma_R(G)\le 2\gamma(G).$
 
 \item $\gamma(G) \le \gamma_r(G)\le \gamma_s(G).$
 \end{enumerate}
\end{proposition}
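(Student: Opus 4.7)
The plan is to verify each inequality either by exhibiting an explicit witness function (or set) of the appropriate type, or by observing that a minimiser for the stronger parameter is automatically a legal function for the weaker one. Four of the five inequalities are essentially immediate from the definitions; only the middle inequality $\gamma_r(G)\le \gamma_R(G)$ requires a short verification of the move condition.

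For chain (i), I would first take a $\gamma_r(G)$-function $f(V_0,V_1,V_2)$ and note that since $f$ has no undefended vertices, $V_1\cup V_2$ is a dominating set of $G$; this gives
\[
\gamma(G)\le |V_1|+|V_2|\le |V_1|+2|V_2|=w(f)=\gamma_r(G).
\]
Next, for $\gamma_R(G)\le 2\gamma(G)$, I would take a minimum dominating set $D\subseteq V(G)$ and define $f(v)=2$ for $v\in D$ and $f(v)=0$ otherwise; this is evidently an RDF of weight $2\gamma(G)$. For the key step $\gamma_r(G)\le \gamma_R(G)$, I would argue that every RDF $f(V_0,V_1,V_2)$ is already a WRDF. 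Given $v\in V_0$, pick a neighbour $u\in V_2$ and form $f'$ by moving one guard from $u$ to $v$, so that $f'(u)=1$ and $f'(v)=1$. Any vertex $z\notin\{u,v\}$ with $f'(z)=0$ was originally in $V_0$ and hence had some neighbour $w\in V_2$ under $f$; if $w\ne u$ then $f'(w)=2>0$, and if $w=u$ then $f'(u)=1>0$, so $z$ is defended under $f'$. The vertices $u$ and $v$ themselves have positive weight in $f'$, so $f'$ has no undefended vertices.

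For chain (ii), the first inequality is exactly the one already shown. The second, $\gamma_r(G)\le \gamma_s(G)$, is immediate because a secure dominating function is by definition a WRDF with $V_2=\emptyset$; any $\gamma_s(G)$-function is then a WRDF of weight $\gamma_s(G)$, so the minimum weight over all WRDFs is no larger.

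The main (and only non-routine) obstacle is the verification that the move from weight $2$ to weight $1$ at $u$ preserves the absence of undefended vertices when promoting an RDF to a WRDF. The crucial observation is that $u$ remains positively weighted after the move, so every vertex previously dominated by $u$ stays dominated; this is precisely why a Roman placement of $2$'s is strong enough to withstand the single "defence step" required by the weak Roman definition.
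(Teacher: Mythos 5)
Your proof is correct. Note that the paper itself offers no proof of this proposition --- it is quoted from Cockayne et al.\ \cite{MR2137919} as a known result --- so there is no in-paper argument to compare against. Your verification is the standard one and is complete: $V_1\cup V_2$ dominating gives $\gamma(G)\le\gamma_r(G)$; the doubled dominating set gives $\gamma_R(G)\le 2\gamma(G)$; the observation that a secure dominating function is by definition a WRDF with $V_2=\emptyset$ gives $\gamma_r(G)\le\gamma_s(G)$; and your case analysis for $\gamma_r(G)\le\gamma_R(G)$ (checking that after moving a guard from $u\in V_2$ to $v$, every zero-weight vertex still sees either its original $V_2$-neighbour with residual weight $2$, or $u$ itself with residual weight $1$) is exactly the point that needs care, and you handle it correctly.
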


The problem of characterizing the graphs with $\gamma_r(G)=\gamma(G)$ was solved by Henning and Hedetniemi \cite{MR1991720}.
The inequality chain (ii) has motivated us to obtain the following result, which shows that the problem of characterizing the graphs with $\gamma_s(G)=\gamma(G)$ is already solved.

\begin{theorem}Let $G$ be a graph. The following statements are equivalent.
\begin{enumerate}[{\rm (i)}]
\item $\gamma_r(G)=\gamma(G)$.
\item $\gamma_s(G)=\gamma(G)$.
\end{enumerate}
\end{theorem}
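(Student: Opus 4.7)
The plan is to exploit the inequality chain $\gamma(G)\le \gamma_r(G)\le \gamma_s(G)$ from Proposition \ref{DominationChain1}(ii). One direction is then immediate: if $\gamma_s(G)=\gamma(G)$, the chain forces $\gamma_r(G)=\gamma(G)$, so (ii) implies (i) with no further work.

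For the converse (i) $\Rightarrow$ (ii), my strategy is to show that whenever $\gamma_r(G)=\gamma(G)$, any $\gamma_r(G)$-function must already be a secure dominating function, i.e.\ must have $V_2=\emptyset$. Let $f(V_0,V_1,V_2)$ be a $\gamma_r(G)$-function, so that $|V_1|+2|V_2|=\gamma(G)$. Since $f$ is a WRDF, $f$ has no undefended vertices, which means $V_1\cup V_2$ is a dominating set of $G$; in particular $|V_1|+|V_2|\ge \gamma(G)$. Combining these two relations yields $|V_2|\le 0$, hence $V_2=\emptyset$.

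Once $V_2=\emptyset$, the function $f$ fits the definition of a secure dominating function, so $V_1$ is a secure dominating set of cardinality $\gamma(G)$. This gives $\gamma_s(G)\le |V_1|=\gamma(G)$, and the reverse inequality is already supplied by the domination chain, so $\gamma_s(G)=\gamma(G)$.

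I do not expect any real obstacle here: the whole proof is a one-line weight comparison once Proposition \ref{DominationChain1}(ii) is in hand. The only conceptual point worth stating explicitly is the observation that $V_1\cup V_2$ is always a dominating set for any WRDF, which is what lets the equality $|V_1|+2|V_2|=\gamma(G)$ collapse to $V_2=\emptyset$.
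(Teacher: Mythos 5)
Your proof is correct and follows essentially the same route as the paper: both directions use the chain $\gamma(G)\le\gamma_r(G)\le\gamma_s(G)$, and the key step for (i) $\Rightarrow$ (ii) is the identical weight comparison $\gamma(G)=|V_1|+2|V_2|\ge|V_1|+|V_2|\ge\gamma(G)$ forcing $V_2=\emptyset$ so that $V_1$ is a secure dominating set. No differences worth noting.
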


\begin{proof}
By Proposition \ref{DominationChain1} (ii), $\gamma_s(G)=\gamma(G)$ leads to  $\gamma_r(G)=\gamma(G)$. Now, if $\gamma_r(G)=\gamma(G)$, then for  any $\gamma_r(G)$-function $f(V_0,V_1,V_2)$ we have $V_2=\emptyset $, as $V_1\cup V_2$ is a dominating set and $\gamma(G)=\gamma_r(G)=|V_1|+2|V_2|\ge |V_1|+|V_2|\ge \gamma(G)$. Hence, $V_1$ is a secure dominating set, which implies that  $\gamma(G)=|V_1|\ge \gamma_s(G)\ge\gamma(G)$. Therefore, $\gamma_s(G)=\gamma(G)$.
\end{proof}

Given a graph $G$ and an edge $e\in E(G)$, the graph obtained from $G$ by removing  $e$ will be denoted by $G-e$, \textit{i.e}.,  $V(G-e)=V(G)$ and $E(G-e)=E(G)\setminus \{e\}.$ As observed in \cite{MR1991720}, any $\gamma_r(G-e)$-function is a WRDF for $G$. Similarly, any $\gamma_s(G-e)$-set is a secure dominating set for $G$.   
Therefore,  the following basic result follows.

\begin{proposition}\label{RemovingEdges}
The following statement hold for any spanning subgraph $H$ of a graph $G$.

\begin{enumerate}[{\rm (i)}]
\item {\rm \cite{MR1991720}} $\gamma_r(G)\le \gamma_r(H).$ 
\item $\gamma_s(G)\le \gamma_s(H).$ 
\end{enumerate}
\end{proposition}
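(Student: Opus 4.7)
The plan is to observe that part (i) is already cited from \cite{MR1991720}, so only part (ii) needs a fresh argument. It suffices to treat the single-edge case $H=G-e$, since any spanning subgraph $H$ can be obtained from $G$ by removing edges one at a time, and then iterating the inequality $\gamma_s(G')\le \gamma_s(G'-e)$ along this sequence yields $\gamma_s(G)\le \gamma_s(H)$.

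For the single-edge case, let $e=uv\in E(G)$ and let $S$ be a $\gamma_s(G-e)$-set. The strategy is to verify directly that $S$ is a secure dominating set of $G$, which will give $\gamma_s(G)\le |S|=\gamma_s(G-e)$. The key monotonicity fact underlying everything is the elementary observation that if $D\subseteq V(G)$ is a dominating set of $G-e$, then $D$ is also a dominating set of $G$, because every vertex dominated in $G-e$ through some edge $e'\neq e$ remains dominated in $G$ via the same edge, and vertices already in $D$ dominate themselves.

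Using this, I would first note that $S$ dominates $G-e$ and hence dominates $G$. Next, given any $w\in V(G)\setminus S$, I pick the exchange vertex $x\in S\cap N_{G-e}(w)$ guaranteed by the secure domination property in $G-e$; since $N_{G-e}(w)\subseteq N_G(w)$, we still have $x\in S\cap N_G(w)$. By definition, $(S\setminus\{x\})\cup\{w\}$ dominates $G-e$, and by the monotonicity observation it therefore dominates $G$. This shows $S$ is secure dominating in $G$, completing the argument.

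There is no real obstacle here; the only thing one must be careful about is that the swap vertex chosen in $G-e$ is guaranteed to be adjacent to $w$ only via an edge of $G-e$, but this inclusion of neighbourhoods goes in the favourable direction, so the same $x$ works in $G$. The structure of the proof is exactly parallel to the cited argument for (i), with the dominating-set property at the swapped configuration replacing the ``no undefended vertex'' property used for the weak Roman variant.
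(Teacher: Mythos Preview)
Your argument is correct and is exactly the approach the paper takes: the paper simply records, in the sentence preceding the proposition, that any $\gamma_s(G-e)$-set is a secure dominating set for $G$, and declares the result to follow. Your proposal is the spelled-out verification of that one-line observation, together with the obvious iteration over removed edges.
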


 \newpage 

\begin{proposition} \label{cycles and pathas}
For any integer $t\ge 4$,
\begin{enumerate}[{\rm (i)}]
\item{\rm \cite{MR1991720}}  $\gamma_r(C_t)=\gamma_r(P_t)=\left\lceil\frac{3t}{7}\right\rceil.$
\item {\rm \cite{MR2137919}}  $\gamma_s(C_t)=\gamma_s(P_t)=\left\lceil\frac{3t}{7}\right\rceil.$
\end{enumerate}
\end{proposition}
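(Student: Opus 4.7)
The plan is to deduce all four equalities from two core tasks and then combine them using Propositions \ref{DominationChain1}(ii) and \ref{RemovingEdges}. Since $P_t$ is obtained from $C_t$ by deleting a single edge, Proposition \ref{RemovingEdges} yields $\gamma_r(C_t)\le\gamma_r(P_t)$ and $\gamma_s(C_t)\le\gamma_s(P_t)$, and any secure (respectively weak Roman) dominating function on $P_t$ remains such on $C_t$, since the extra edge only helps the defender. Hence it suffices to establish (a) the upper bound $\gamma_s(P_t)\le\left\lceil 3t/7\right\rceil$ via an explicit SDF and (b) the matching lower bound $\gamma_r(C_t)\ge\left\lceil 3t/7\right\rceil$; chaining these with $\gamma_r(G)\le\gamma_s(G)$ from Proposition \ref{DominationChain1}(ii) forces the four invariants to coincide with $\left\lceil 3t/7\right\rceil$.

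For (a), write $t=7q+r$ with $q\ge 0$ and $0\le r\le 6$. On the first $7q$ vertices $v_1,\dots,v_{7q}$ I place unit guards at the positions congruent to $2$, $4$, or $6\pmod 7$, contributing $3q$ guards; a direct case analysis confirms that on each block $\{v_{7k+1},\dots,v_{7k+7}\}$ the set $\{v_{7k+2},v_{7k+4},v_{7k+6}\}$ is a secure dominating set and that the two-zero junction between consecutive blocks poses no new threat. For the $r$ trailing vertices I attach a short, hand-tuned cap of $\left\lceil 3r/7\right\rceil$ additional unit guards, with one configuration per residue $r\in\{1,\dots,6\}$ (and none when $r=0$), handling the edge cases $t\in\{4,5,6\}$ by the cap alone. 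The resulting function has total weight $3q+\left\lceil 3r/7\right\rceil=\left\lceil 3t/7\right\rceil$.

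For (b), I would establish the local estimate: for every WRDF $f$ on $C_t$ and every seven consecutive vertices $v_1,\dots,v_7$, one has $f(v_1)+\cdots+f(v_7)\ge 3$. Summing this over all $t$ cyclic starting positions and noting that each vertex is counted exactly seven times yields $7\,w(f)\ge 3t$, hence $\gamma_r(C_t)\ge\left\lceil 3t/7\right\rceil$. The estimate is argued by contradiction: suppose the sum is at most $2$. If it is $0$ or $1$, then an interior vertex $v_j$ with $2\le j\le 6$ has both its cycle-neighbours $v_{j-1},v_{j+1}$ inside the window and equal to zero, violating domination. If the sum equals $2$ and is realised by a single value-$2$ vertex, then that vertex dominates at most three vertices of the window, so some interior zero remains undominated. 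Otherwise two unit-weight vertices $(v_i,v_j)$ realise the sum, and the requirement that their closed neighbourhoods cover $\{v_2,\dots,v_6\}$ reduces the possibilities to the cases $(i,j)\in\{(2,5),(3,5),(3,6)\}$. In each of these, attacking the interior zero $v_4$ forces the unique available defender to vacate a vertex whose departure strands either $v_2$ or $v_6$ with both cycle-neighbours (lying inside the window) equal to zero, contradicting the security requirement.

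The main obstacle is this last step: one must verify for all three sum-$2$ sub-configurations that the security condition genuinely fails. What makes the argument work is that each of $v_2,\dots,v_6$ has both its cycle-neighbours inside the seven-block, so no external guard can rescue a vertex stranded by the defender's move; without this observation, one could worry that the outside of the window patches the local deficiency. Once this local estimate is in hand, the discharging sum closes the lower bound, and the reductions outlined in the first paragraph complete the proof of both (i) and (ii).
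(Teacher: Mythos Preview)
The paper does not prove this proposition: both parts are quoted from the literature (part~(i) from \cite{MR1991720}, part~(ii) from \cite{MR2137919}) with no argument supplied. Your proposal therefore fills in what the paper merely cites, and the overall architecture---an explicit secure dominating set on $P_t$ for the upper bound, a local seven-vertex estimate on $C_t$ for the lower bound, and the sandwich
\[
\left\lceil\tfrac{3t}{7}\right\rceil \le \gamma_r(C_t)\le\gamma_r(P_t)\le\gamma_s(P_t)\le\left\lceil\tfrac{3t}{7}\right\rceil,
\qquad
\gamma_r(C_t)\le\gamma_s(C_t)\le\gamma_s(P_t),
\]
obtained from Propositions~\ref{DominationChain1}(ii) and~\ref{RemovingEdges}---is correct. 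In particular, your reduction of the weight-two configuration on a seven-window to $(i,j)\in\{(2,5),(3,5),(3,6)\}$ is accurate, and the key observation that the interior vertices $v_2,\dots,v_6$ have both cycle-neighbours inside the window (so cannot be rescued externally after the defender moves) is exactly what makes the local estimate go through.

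Two small gaps remain. First, the discharging step needs the seven consecutive cycle vertices to be distinct, so as written it only yields $\gamma_r(C_t)\ge\lceil 3t/7\rceil$ for $t\ge 7$; the cases $t\in\{4,5,6\}$ require a separate direct check that no WRDF of weight $\lceil 3t/7\rceil-1$ exists on $C_t$ (trivial for $t=4$ since $\gamma(C_4)=2$, and a short case analysis for $t=5,6$). Second, the ``hand-tuned caps'' for the residues $r\in\{1,\dots,6\}$ are asserted rather than exhibited; these are routine, but the secure-domination condition at the junction between the periodic part and the cap still has to be verified in each case.
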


By Proposition \ref{RemovingEdges} (ii) and Proposition \ref{cycles and pathas} (ii) we deduce the following result.

\begin{theorem}\label{BoundSecureDominatingHamiltonia}
For any Hamiltonian graph $G$ with $n(G)\ge 4$,
$$\gamma_s(G)\le \left\lceil\frac{3n(G)}{7}\right\rceil.$$
\end{theorem}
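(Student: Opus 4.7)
The plan is to exploit the fact that a Hamiltonian graph has a spanning subgraph isomorphic to a cycle, and then chain the two previous propositions together. The monotonicity of $\gamma_s$ under edge removal (Proposition \ref{RemovingEdges}(ii)) converts a bound on the cycle into a bound on $G$, and Proposition \ref{cycles and pathas}(ii) supplies the exact value for the cycle.

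Concretely, I would proceed as follows. Since $G$ is Hamiltonian with $n(G) \ge 4$, it contains a Hamiltonian cycle $C$, which is a spanning subgraph of $G$ with $C \cong C_{n(G)}$. By Proposition \ref{RemovingEdges}(ii), applied to this spanning subgraph, we obtain
\[
\gamma_s(G) \le \gamma_s(C) = \gamma_s(C_{n(G)}).
\]
Then, since $n(G) \ge 4$, Proposition \ref{cycles and pathas}(ii) gives $\gamma_s(C_{n(G)}) = \left\lceil \tfrac{3 n(G)}{7} \right\rceil$. Combining the two relations yields the desired inequality.

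There is essentially no obstacle: the entire argument is a two-line application of the previously listed results, and the only thing one must be careful about is that the hypothesis $n(G) \ge 4$ is exactly what is required to invoke Proposition \ref{cycles and pathas}(ii). Hence the proof I would write is just the two-step chain above, with a sentence noting that the Hamiltonian cycle is the spanning subgraph $H$ appearing in the hypothesis of Proposition \ref{RemovingEdges}(ii).
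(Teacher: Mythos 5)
Your proposal is correct and is exactly the paper's argument: the paper also deduces the theorem by applying Proposition \ref{RemovingEdges}(ii) to the Hamiltonian cycle viewed as a spanning subgraph and then invoking Proposition \ref{cycles and pathas}(ii) for $C_{n(G)}$ with $n(G)\ge 4$. No gaps.
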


Obviously, the bound above is tight, as it is achieved by any cycle graph of order at least four.

A set $S\subseteq V(G)$ is a $k$-\emph{dominating set} if $|N(v)\cap S|\ge k$ for every $v\in \overline{S}$. The minimum cardinality among all $k$-dominating sets is called the $k$-\emph{domination number} of $G$ and it is denoted by $\gamma_k(G)$. 
It is readily seen that any $2$-dominating set is a secure dominating set. Therefore, we can state the following result.

\begin{theorem}{\rm \cite{MR3258160}}\label{UpperBound2domination}
For any graph  $G$,
 $$\gamma_s(G)\le  \gamma_2(G).$$
\end{theorem}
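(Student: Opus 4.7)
The plan is to show directly that any $2$-dominating set $S$ is a secure dominating set; the bound $\gamma_s(G) \le \gamma_2(G)$ then follows by specialising to a minimum $2$-dominating set.

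First I would verify that $S$ is a dominating set, which is immediate: every vertex $v \in \overline{S}$ satisfies $|N(v) \cap S| \ge 2 \ge 1$, so $v$ has a neighbour in $S$. Then I would check the defence condition: given $v \in \overline{S}$, I need $u \in N(v) \cap S$ so that $S' := (S \setminus \{u\}) \cup \{v\}$ is still a dominating set. Since $|N(v) \cap S| \ge 2$, such a $u$ certainly exists (any neighbour of $v$ in $S$ will do).

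The core step is showing that $S'$ dominates $V(G)$. Let $w \in \overline{S'}$; I would split into two cases. If $w = u$, then $w$ is adjacent to $v \in S'$, so $w$ is dominated. Otherwise $w \ne u$ and $w \notin S'$, which forces $w \notin S$; hence $w \in \overline{S}$, so the $2$-domination hypothesis gives $|N(w) \cap S| \ge 2$. Since at most one element of $N(w) \cap S$ equals $u$, at least one neighbour of $w$ lies in $S \setminus \{u\} \subseteq S'$, and $w$ is dominated.

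I do not expect any serious obstacle; the argument is essentially a book-keeping check, and the main thing to be careful about is the edge case $w = u$ in the domination verification (which is where the chosen neighbour $v$ of $u$ takes over the role previously played by $u$). One might also briefly note the degenerate case $S = V(G)$, where $\overline{S} = \emptyset$ and the secure-domination condition is vacuously satisfied.
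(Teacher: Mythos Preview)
Your proposal is correct and follows exactly the approach the paper indicates: the paper simply remarks that ``it is readily seen that any $2$-dominating set is a secure dominating set'' and then states the theorem, and your argument supplies precisely the routine verification of that claim.
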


\begin{theorem}{\rm \cite{MR2529132}}\label{UpperBoundnOver2}
Let $G\not\cong C_5$ be a  connected graph. If $\delta(G)\ge~2$, then $$\displaystyle\gamma_s(G)\le \left\lfloor \frac{n(G)}{2}\right\rfloor .$$
\end{theorem}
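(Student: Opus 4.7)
The plan is to proceed by strong induction on $n = n(G)$, feeding the Hamiltonian case through Theorem \ref{BoundSecureDominatingHamiltonia}. For the base cases, one verifies the bound directly on the (finitely many) connected graphs with $\delta \ge 2$ on at most six vertices; a case check shows that $C_5$ is the unique such graph violating $\gamma_s(G) \le \lfloor n/2 \rfloor$, matching the statement exactly.

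For the inductive step, assume the statement for all qualifying graphs of smaller order and let $G$ be as in the hypothesis with $n \ge 7$. If $G$ is Hamiltonian, then Theorem \ref{BoundSecureDominatingHamiltonia} gives $\gamma_s(G) \le \lceil 3n/7 \rceil$, and an elementary calculation yields $\lceil 3n/7 \rceil \le \lfloor n/2 \rfloor$ for every $n \ge 6$. If $G$ is not Hamiltonian, we aim to isolate a small \emph{removable configuration}: a subset $X \subseteq V(G)$ with $|X| \in \{1,2\}$ such that $G' := G - X$ is a connected graph of smaller order satisfying $\delta(G') \ge 2$ and $G' \not\cong C_5$. The inductive hypothesis then supplies a secure dominating set $S'$ of $G'$ with $|S'| \le \lfloor n(G')/2 \rfloor$, and one verifies that $S'$ extends to a secure dominating set of $G$ by adjoining at most $\lceil |X|/2 \rceil$ vertices. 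The natural candidates for $X$ are a degree-$2$ vertex $v$ both of whose neighbours have degree $\ge 3$ in $G$ (remove $v$), or a pair of adjacent degree-$2$ vertices (remove both and add back one). The global minimum-degree hypothesis together with non-Hamiltonicity is exploited to guarantee that at least one such configuration exists.

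The main obstacle is the residual case in which no such small removable configuration can be found: this corresponds to graphs in which every vertex has a degree-$2$ neighbour, so that $G$ is essentially a subdivision of a smaller multigraph of minimum degree $\ge 3$. One handles this by viewing the degree-$2$ vertices as internal vertices of subdivided edges, placing guards along each subdivided path according to the pattern behind $\gamma_s(P_t) = \lceil 3t/7 \rceil$ from Proposition \ref{cycles and pathas}(ii), and finally adjusting at the branch vertices of degree $\ge 3$ in order to cover the domination and secure-swap conditions at the interfaces. The delicate bookkeeping is to check that the total number of guards stays at most $\lfloor n/2 \rfloor$; this follows from balancing the path contributions (which are strictly below half the number of path-internal vertices, once $C_5$-type behaviour is avoided) against the comparatively small number of branch vertices, and this balancing is where the technical heart of the argument lies.
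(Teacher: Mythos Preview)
This theorem is not proved in the paper at all; it is quoted from Burger, Henning and van Vuuren \cite{MR2529132}, so there is no in-paper argument to compare your sketch against. I can only assess whether your outline would stand on its own, and as written it does not.

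The most concrete failure is arithmetic. When $|X|=1$ and $n$ is odd, the inductive bound gives $|S'|\le\lfloor(n-1)/2\rfloor=(n-1)/2$, and adjoining $\lceil|X|/2\rceil=1$ vertex yields $(n+1)/2>\lfloor n/2\rfloor$; so the single-vertex removal cannot work for odd $n$ in the way you describe. Beyond that, almost nothing is actually argued. You claim that non-Hamiltonicity guarantees a removable configuration, but supply no connection between the two properties; deleting a degree-$2$ vertex (or two adjacent ones) can disconnect $G$, drop $\delta$ below $2$ at an outer neighbour, or leave $G'\cong C_5$, and you do not say how these are avoided. The phrase ``one verifies that $S'$ extends to a secure dominating set'' hides a genuine check: secure domination requires that every outside vertex admit a guard-move producing no undefended vertex, and the newly adjoined vertex interacts with $S'$ in ways not controlled by the induction hypothesis on $G'$. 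Finally, the ``subdivision'' case---which you yourself call the technical heart---is not proved at all: placing guards along subdivided paths via the $\lceil 3t/7\rceil$ pattern and ``adjusting at the branch vertices'' is a plan, not an argument, and you give no inequality showing the total stays at most $\lfloor n/2\rfloor$. These are real gaps, not routine omissions.
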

An example of a graph with $\delta(G)=3$ and $\gamma_s(G)=\gamma_2(G)= \left\lfloor \frac{n(G)}{2} \right\rfloor$ is the $3$-cube graph. Notice that from the result above and the fact that $\gamma_r(G)\le \gamma_s(G)$ we can conclude that if $G\not\cong C_5$ is connected and  $\delta(G)\ge 2$, then $ \displaystyle\gamma_r(G)\le \left\lfloor \frac{n(G)}{2}\right\rfloor .$ With the aim of providing a general upper bound on the weak Roman domination number of any graph in terms of $n(G)$, we need to introduce some additional notation. 
For any support vertex $v$ of a tree $T$, the set of leaves adjacent to $v$ in $T$ will 
be denoted by  $L_T(v)$.   Let $S(T)$ be the set of support vertices $v\in V(T)$ of degree  $\delta(v)\le |L_T(v)|+1$ and define
$$X(T)=\bigcup_{v\in S(T)}(\{v\}\cup L_{T}(v)) .$$ 
 Let $T_0,T_1,\dots, T_k$ be the sequence of all embedded subtrees of $T$, of order greater than or equal to three, defined as follows: $T_0=T$ and $T_i$ is the subtree of $T_{i-1}$ induced by $V(T_{i-1})\setminus X(T_{i-1})$, for every $i\in \{ 1,\dots ,k\}$. Notice that the smallest subtree $T_k$ satisfies  $|V(T_k)\setminus X(T_k)|\le 2$. With this notation in mind we proceed to prove the two following  results.

\begin{theorem}
For any connected nontrivial graph $G$,
 $$\gamma_r(G)\le \left\lfloor \frac{2n(G)}{3}\right \rfloor.$$
\end{theorem}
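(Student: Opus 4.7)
The plan is to reduce the problem to trees via Proposition~\ref{RemovingEdges}(i) and then build a weak Roman dominating function on any tree $T$ of weight at most $\lfloor 2n(T)/3\rfloor$ by exploiting the embedded-subtree decomposition $T_0,T_1,\dots,T_k$ defined just before the statement.

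Since $\gamma_r(G)\le \gamma_r(T)$ for any spanning tree $T$ of $G$, it suffices to handle the case in which $G=T$ is a tree. I would then construct $f:V(T)\longrightarrow\{0,1,2\}$ by placing two guards on every support vertex peeled off at some level of the decomposition, that is, $f(v)=2$ for each $v\in\bigcup_{i=0}^{k}S(T_i)$ and $f(\ell)=0$ for each leaf $\ell\in L_{T_i}(v)$ accompanying it. The residual set $V(T_k)\setminus X(T_k)$ has cardinality at most two, and its vertices would receive weight $1$ each.

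The verification that $f$ is a WRDF is the routine part of the argument: if a leaf $\ell\in L_{T_i}(v)$ is attacked, the guard at $v$ moves to $\ell$, and because $v$ has at most one non-leaf neighbour in $T_i$ (by the defining inequality $\delta_{T_i}(v)\le |L_{T_i}(v)|+1$), that unique non-leaf neighbour, if it exists, is itself peeled off at a higher level of the decomposition or belongs to the residual set, so it already carries its own guard; after the move, $v$ still has weight $1$ and every remaining leaf of $v$ remains defended. Attacks on the residual vertices of weight $0$ are handled symmetrically.

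The weight-counting step is where the work lies, and also where I foresee the main obstacle. Each support $v\in S(T_i)$ with $|L_{T_i}(v)|\ge 2$ contributes weight $2$ while covering at least three vertices of $X(T_i)$, matching the target ratio $2/3$. The delicate case is a degree-two support $v$ with exactly one leaf, where weight $2$ is charged locally against only two vertices; my plan is to amortize by transferring the deficit to the unique non-leaf neighbour of $v$, which descends into $T_{i+1}$ and is processed at the next level, and to bound chains of such degree-two supports so that across all layers the total weight stays within $\lfloor 2n(T)/3\rfloor$. The at most two units of weight spent on the residual $V(T_k)\setminus X(T_k)$ are then absorbed by the floor, completing the count.
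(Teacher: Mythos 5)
There is a genuine gap, and it sits exactly where you predicted: the weight count. Your construction places \emph{two} guards on every peeled support vertex, including a support vertex $v$ with $|L_{T_i}(v)|=1$, and then hopes to amortize the resulting local ratio of $2/2$ down to $2/3$ by pushing the deficit onto the non-leaf neighbour of $v$ in $T_{i+1}$. This cannot be completed. Take $T=P_4$ with vertices $v_1v_2v_3v_4$: here $S(T_0)=\{v_2,v_3\}$, $X(T_0)=V(T_0)$, there is no $T_1$ to absorb anything, and your function has weight $4$ while the claimed bound is $\lfloor 8/3\rfloor=2$ (indeed $\gamma_r(P_4)=2$). The function itself is simply too heavy, so no bookkeeping can rescue it. The paper's construction avoids the problem at the source: it sets $f(v)=2$ only when $|L_{T_i}(v)|\ge 2$ (weight $2$ spread over at least $3$ vertices of $X(T_i)$) and $f(v)=1$ when $|L_{T_i}(v)|=1$ (weight $1$ over $2$ vertices, ratio $1/2<2/3$), which makes the inequality $\sum_{v\in X(T_i)}f(v)\le \tfrac{2}{3}|X(T_i)|$ hold layer by layer with no amortization needed. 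One guard suffices there for the defence too: when the single leaf $\ell$ of $v$ is attacked, the guard at $v$ moves to $\ell$, and $v$ stays defended by that guard while $v$'s unique non-leaf neighbour is protected within its own later layer.

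A second, smaller gap is your treatment of the residual set $V(T_k)\setminus X(T_k)$: giving each of its at most two vertices weight $1$ adds up to $2$ to the total, and $\tfrac{2}{3}(n-2)+2=\tfrac{2n}{3}+\tfrac{2}{3}$ is \emph{not} bounded by $\lfloor 2n/3\rfloor$, so the floor does not absorb it. The paper instead assigns weights $0$ and $1$ to the two residual vertices (and, when there is a single residual vertex, assigns it $0$ whenever it has a neighbour carrying two guards, or else uses the strict inequality available when some support vertex has exactly one leaf), which keeps the total strictly below $2n(G)/3$ in every residual case.
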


\begin{proof}
Since the case $n(G)=2$ is straightforward, we can assume that $n(G)\ge 3$. Let $T$ be a spanning tree of $G$ and $T_0,T_1,\dots, T_k$  the sequence of all embedded subtrees of $T$ of order greater than or equal to three defined previously. By Proposition \ref{RemovingEdges}, $\gamma_r(G)\le \gamma_r(T)$. It remains to show that $\gamma_r(T)\le \frac{2n(G)}{3}$. To this end, we proceed to construct a WRDF $f$ such that $w(f)\le \frac{2n(G)}{3}$. 
 
For every $v\in X(T_i)$ and $i\in \{0,\dots, k\}$ we set 
$$f(v)=\left\lbrace{\begin{array}{cl}
2&\text{ if }v\in S(T_i) \text{ and } |L_{T_i}(v)|\ge 2,
\\
\\
1&\text{ if }v\in S(T_i)  \text{ and } |L_{T_i}(v)|=1,
\\
\\
0&\text{ if }v\in X(T_i)\setminus S(T_i) .
\end{array}}\right.$$
Notice that $V(G)=\displaystyle\bigcup_{i=0}^kX(T_i)\cup\left( V(T_k)\setminus X(T_k)\right)$
and $X(T_i)\cap X(T_j)=\emptyset$ for every $i\ne j$. Hence,   it remains to define $f(x)$ for every $x\in V(T_k)\setminus X(T_k)$, if any. 

Notice that for any $i\in \{0,\dots, k\}$,  
\begin{equation}\label{eq2/3}
\sum_{v\in X(T_i)}f(v)=\sum_{v\in S(T_i)}f(v)\le \frac{2}{3}|X(T_i)|
\end{equation} and, if there is a support vertex $v$ of $T_i$ with $|L_{T_i}(v)|=1$, then \begin{equation}\label{lesseq2/3}
\sum_{v\in X(T_i)}f(v)=\sum_{v\in S(T_i)}f(v)<\frac{2}{3}|X(T_i)|.
\end{equation}

%%%%% Caso 1
Hence, if 
$V(T_k)=X_{k}$  then  $\sum_{i=0}^{k} |X(T_i)|=n(G)$, which implies that
$$w(f)=\sum_{i=0}^{k}\left( \sum_{v\in X(T_i)}f(v)   \right) \le \frac{2}{3}\sum_{i=0}^{k} |X(T_i)|\le\frac{2n(G)}{3}.$$ 
%%%% Caso 2
Suppose that $V(T_k)\setminus X_{k}=\{x\}$. In this case, we set $f(x)=0$  whenever $f(v)=2$ for some neighbour $v$ of $x$,  otherwise we set $f(x)=1$. Obviously, if $f(x)=0$, then 
 $$w(f)=\sum_{i=0}^{k}\left( \sum_{v\in X(T_i)}f(v)   \right)+f(x) \le \frac{2}{3}\sum_{i=0}^{k} |X(T_i)|\le\frac{2(n(G)-1)}{3}< \frac{2n(G)}{3}.$$ 
 Now, if $f(x)=1$, then \eqref{lesseq2/3} leads to  $\sum_{v\in X_{k}}f(v)\le\frac{2}{3}|X_{k}|-1$, which implies that 
 \begin{align*}
 w(f)=&\sum_{i=0}^{k-1}\left( \sum_{v\in X(T_i)}f(v)   \right) +\sum_{v\in X_{k}}f(v)+f(x)\\
  \le &\frac{2}{3}\sum_{i=0}^{k-1}|X(T_i)|+\left(\frac{2}{3}|X(T_k)|-1\right)+1\\
  =& \frac{2(n(G)-1)}{3}< \frac{2n(G)}{3}.
 \end{align*}
%%%% Caso 3
Finally, if $V(T_k)\setminus X_{k}=\{a,b\}$ , then  we set $f(a)=0$ and $f(b)=1$. Thus,
\begin{align*}
w(f)&=\sum_{i=0}^{k}\left( \sum_{v\in X(T_i)}f(v)   \right)+f(a)+f(b) \\
&\le \frac{2}{3}\sum_{i=0}^{k} |X(T_i)|+1\\
&= \frac{2(n(G)-2)}{3}+1<\frac{2n(G)}{3}.
\end{align*}
 In summary, we can conclude that $w(f)\le \frac{2n(G)}{3}$, and it is readily seen that $f$ is a WRDF. Therefore, the result follows.
\end{proof}

To see that the bound above is tight we can take any graph $G_1$ and construct the corona graph $G\cong G_1\odot N_2$ by considering one copy of $G_1$ and $n(G_1)$ copies of $N_2$ and joining, by an edge, each vertex of $G_1$ with the vertices in the corresponding copy of $N_2$. In this case we have $\gamma_r(G)=2n(G_1)$ and $n(G)=3n(G_1)$.

 %%%%%%%%%%
\begin{theorem}\label{SuperBoundSecDomTrees}
Let $T$ be a spanning tree  of a connected graph $G$ such that $n(G)\ge 3$. If $T_0,T_1,\dots, T_k$ is the sequence of all embedded subtrees of $T$ of order greater than or equal to three defined above, then 
$$\gamma_s(G)\le \sum_{i=0}^k\sum_{v\in S(T_i)}|L_{T_i}(v)|+\varrho(T),$$
where  $\varrho(T)=0$ if $V(T_k)=X(T_k)$ and $\varrho(T)=1$ otherwise.
\end{theorem}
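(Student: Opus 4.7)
By Proposition~\ref{RemovingEdges}(ii) it suffices to construct a secure dominating set $S$ of the spanning tree $T$ of cardinality at most $\sum_{i=0}^{k}\sum_{v\in S(T_i)}|L_{T_i}(v)|+\varrho(T)$. My plan is to place a guard at every leaf of every ``selected'' support vertex in the recursion, together with at most one guard on a leftover vertex. Concretely, let
\[
S:=\bigcup_{i=0}^{k}\bigcup_{v\in S(T_i)} L_{T_i}(v),
\]
and, if $\varrho(T)=1$, append one further vertex from $V(T_k)\setminus X(T_k)$. The leaf-sets $L_{T_i}(v)$ are pairwise disjoint across all pairs $(i,v)$, since each leaf is recorded at the unique recursion step in which it is processed; consequently $|S|$ matches the claimed bound.

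Before verifying the protection properties I would establish the auxiliary fact that, for every $i$, the induced subgraph of $T_i$ on $V(T_i)\setminus X(T_i)$ is connected. A short case analysis on the first vertex of any $T_i$-path between remainder vertices that enters $X(T_i)$ yields this: such a vertex cannot be a leaf of $T_i$ (its path-degree is two), and if it is a support vertex in $S(T_i)$, then its predecessor on the path belongs to the remainder and is thus forced to be its unique non-leaf $T_i$-neighbour, making the successor a leaf of the support, which then contradicts the successor's path-degree (or remainder status, if it is the path endpoint). In particular, when $|V(T_k)\setminus X(T_k)|=2$ the two leftover vertices are adjacent in $T$, so the single added leftover dominates the other.

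That $S$ dominates $T$ is then immediate: every non-$S$ vertex is either a support vertex $v\in S(T_i)$ (dominated by the non-empty $L_{T_i}(v)\subseteq S$) or the single unadded leftover (dominated by the adjacent added one). The decisive step is security. For an attack at $v\in S(T_i)$ I defend with an arbitrary $\ell\in L_{T_i}(v)$ and set $S':=(S\setminus\{\ell\})\cup\{v\}$. Every $T$-neighbour of $\ell$ other than $v$ must lie in $\bigcup_{j<i}X(T_j)$, because $\ell$ is a leaf of the induced subtree $T_i$; and each such neighbour is either itself a leaf already in $S\cap S'$ or a support vertex of an earlier level dominated by its own leaves in $S\cap S'$ (all distinct from $\ell$). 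Hence no vertex of $T$ becomes undominated, and $S'$ dominates $T$. The symmetric argument, using the connectivity lemma to supply the adjacent defender, handles an attack at the unadded leftover vertex.

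The main obstacle I anticipate is the bookkeeping inside the security check: a single leaf $\ell$ may have $T$-neighbours scattered across many earlier levels $X(T_j)$, and one must argue uniformly that none of them loses its unique dominator when $\ell$ departs. The structural dichotomy recorded above for each $X(T_j)$---each of its vertices is either a leaf already in $S$ or a support vertex dominated by its own leaves in $S$---is precisely what makes this uniform argument go through and closes the proof.
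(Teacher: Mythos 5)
Your proposal is correct and takes essentially the same route as the paper: the identical guard set (all leaves $L_{T_i}(v)$ over all levels, plus one vertex $x_k$ of $V(T_k)\setminus X(T_k)$ when $\varrho(T)=1$) defended by the identical moves (a leaf to its support vertex, or $x_k$ to the other leftover vertex). Your connectivity lemma for $V(T_i)\setminus X(T_i)$ and the level-by-level disjointness bookkeeping simply make explicit the verifications the paper's proof leaves as observations.
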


\begin{proof}
Notice that Proposition \ref{RemovingEdges} leads to $\gamma_s(G)\le \gamma_s(T)$. Let  $$W=\bigcup_{i=0}^k\left( \bigcup_{v\in S(T_i)}L_{T_i}(v) \right) \cup W_k,$$  
where  $W_k$ is defined as follows. If $V(T_k)=X(T_k)$, then we set $W_k=\emptyset $, otherwise we fix $x_k\in  V(T_k)\setminus X(T_k)$ and we set  $W_k=\{x_k\}$. To  conclude  that $W$ is a  secure dominating set for $T$  we only need to 
observe that $W$ is a dominating set and the movement of a guard from $L_{T_i}(v)$ to $v$ does not produce undefended vertices, as well as, the movement of a guard from $x_k$ to a vertex in $ V(T_k)\setminus X(T_k)$ (if any) does not produce undefended vertices. Therefore, the result follows.
\end{proof}

The bound above is achieved, for instance, by the family of corona graphs $G\cong G_1\odot N_t$. Obviously, for any spanning tree $T$ of $G$ we have $\varrho(T)=0$ and $tn(G_1)\le \gamma_r(G)=\sum_{i=0}^k\sum_{v\in S(T_i)}|L_{T_i}(v)|+\varrho(T)=tn(G_1)$. Notice that the lower bound $\gamma_r(G)\ge tn(G_1)$ is deduced from the fact that every secure dominating set contains at least one guard per each vertex of degree one in $G$. In general, we can state the following tight bound in terms of the number  of vertices of degree one, denoted by $\ell(G)$.

\begin{remark}
For any graph $G$, $$\gamma_s(G)\ge \ell(G).$$
In particular, for any graph $G'$, $$\gamma_s(G'\odot N_t)= \ell(G'\odot N_t)=n(G')t.$$
\end{remark}

Two edges in a graph $G$ are
independent if they are not adjacent in $G$. The \emph{matching number} $\alpha'(G)$ of graph $G$, sometimes known as the edge independence number, is the cardinality of a maximum independent edge set.

%The following result is very useful for graphs of minimum degree one.

\begin{theorem} \label{alphaCockayne2003} {\rm\cite{Cockayne2003}}
If a graph $G$ does not have isolated vertices, then $$\gamma_s(G)\le n(G)-\alpha'(G).$$\end{theorem}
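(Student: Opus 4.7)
The plan is to exhibit a secure dominating set of size $n(G)-\alpha'(G)$ built directly from a maximum matching. Let $M=\{u_1v_1,\ldots,u_kv_k\}$ be a maximum matching of $G$, where $k=\alpha'(G)$, and set
\[
D=V(G)\setminus\{u_1,\ldots,u_k\},
\]
so that $|D|=n(G)-k$. I would then verify in two steps that $D$ is a secure dominating set, which by definition gives $\gamma_s(G)\le|D|=n(G)-\alpha'(G)$.

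First I would check that $D$ is a dominating set. Every vertex $u_i\in V(G)\setminus D$ has its matching partner $v_i\in D$ as a neighbour, while every vertex outside $\{u_1,\ldots,u_k\}$ lies in $D$ and dominates itself. (The hypothesis that $G$ has no isolated vertices guarantees that $\alpha'(G)\ge 1$ and makes the bound meaningful; it is not really used in the domination check beyond ensuring that $V(G)\setminus V(M)$ consists of vertices that dominate themselves, which they do in any case.)

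Second, I would check the secure domination condition. For each $u_i\in V(G)\setminus D$, choose the defender $w=v_i\in D$ (which is adjacent to $u_i$), and consider $D_i=(D\setminus\{v_i\})\cup\{u_i\}$. The only vertices missing from $D_i$ are $v_i$ and the $u_j$ for $j\ne i$. The vertex $v_i$ is dominated by $u_i\in D_i$, and for each $j\ne i$ the vertex $u_j$ is dominated by $v_j$, which still lies in $D_i$. Hence $D_i$ is a dominating set of $G$, so $D$ is secure.

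I do not anticipate a real obstacle here: the proof is essentially the observation that matching edges provide ready-made ``swap partners'' for the secure domination requirement. The only subtle point to double check is that picking the defender to be the matched partner $v_i$ indeed restores domination for every $u_j$ with $j\ne i$, which works precisely because distinct matching edges are vertex-disjoint, so the partner $v_j$ of any other uncovered vertex is untouched by the swap.
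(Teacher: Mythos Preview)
Your argument is correct: removing one endpoint from each edge of a maximum matching yields a secure dominating set, because the swap $v_i \mapsto u_i$ leaves every other $u_j$ still dominated by its own partner $v_j$. Note that the paper does not supply a proof of this statement---it is quoted as a result of Cockayne, Favaron and Mynhardt~\cite{Cockayne2003}---so there is nothing to compare against; your proof is in fact the standard one for this bound.
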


It is known that for every graph
$G$ with no isolated vertex $\alpha'(G)\ge \gamma(G)$ \cite{Haynes1998}. Hence,  Theorem \ref{alphaCockayne2003} leads to the following corollary.

\begin{corollary}\label{SecuredominationVSdomination}
If a graph $G$ does not have isolated vertices, then 
$$\gamma_s(G)\le n(G)-\gamma(G).$$
\end{corollary}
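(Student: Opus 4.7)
The plan is to observe that the statement is nothing more than chaining together the two facts displayed immediately above the corollary. First, I would invoke Theorem~\ref{alphaCockayne2003}, which guarantees that $\gamma_s(G) \le n(G) - \alpha'(G)$ whenever $G$ has no isolated vertices. Second, I would use the classical inequality $\alpha'(G) \ge \gamma(G)$ for graphs without isolated vertices (cited from \cite{Haynes1998} in the sentence preceding the corollary).

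From $\alpha'(G) \ge \gamma(G)$ we immediately get $n(G) - \alpha'(G) \le n(G) - \gamma(G)$, and substituting this into the bound from Theorem~\ref{alphaCockayne2003} yields
$$\gamma_s(G) \le n(G) - \alpha'(G) \le n(G) - \gamma(G),$$
which is the claim. The hypothesis that $G$ has no isolated vertices is needed in both ingredients, so it transfers without any extra assumption.

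In short, there is essentially no obstacle: the corollary is a one-line consequence of a previously stated theorem together with a standard inequality relating matching number and domination number. The only drafting choice is whether to cite $\alpha'(G) \ge \gamma(G)$ explicitly or to recall a brief justification (every maximal matching gives a dominating set of size $2\alpha'(G)$, and a sharper argument yields $\gamma \le \alpha'$); I would simply cite \cite{Haynes1998} to keep the proof to two lines.
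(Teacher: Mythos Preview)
Your proposal is correct and matches the paper's own argument exactly: the corollary is obtained by combining Theorem~\ref{alphaCockayne2003} with the inequality $\alpha'(G)\ge \gamma(G)$ from \cite{Haynes1998}, precisely as stated in the sentence preceding the corollary.
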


Recall that a graph without isolated vertices satisfies $\gamma(G)=n(G)/2$ if and only if its components are isomorphic to $C_4$ or to corona graphs of the form $H\odot K_1$.
If $\gamma(G)=n(G)/2$, then Corollary \ref{SecuredominationVSdomination} leads to 
$\frac{n(G)}{2}=\gamma(G)\le \gamma_r(G)\le \gamma_s(G)\le \frac{n(G)}{2}$. Thus, we deduce
the following result.

\begin{remark}\label{DominationAndSecure=half}
 If $\gamma(G)=\frac{n(G)}{2}$, then $\gamma_r(G)=\gamma_s(G)=\frac{n(G)}{2}.$
\end{remark}

As we will show in Theorem \ref{SecuredominationVSdominationTau}, in some cases the bound provided by Theorem \ref{alphaCockayne2003} can be improved. To this end, we need to introduce some additional notation. Let $\mathcal{D}(G)$ be the set of all $\gamma(G)$-sets. For every $S\in \mathcal{D}(G)$ we define 
$$T(S)=\{v\in V(G)\setminus S:\; N[v]=N[s] \text{ for some } s\in S\}.$$
Finally, we define 
$$\tau(G)=\max \{|T(S)|:\; S\in \mathcal{D}(G)\}.$$
Recall that two vertices $u,v$ are called \emph{true twins} if $N[u]=N[v] $.

\begin{lemma}\label{securecomplementlesstau}
Let $G$ be a graph such that no component of $G$ is a complete graph. If $S$ is a  $\gamma(G)$-set, then $V(G)\setminus (S\cup T(S))$ is a dominating set.
\end{lemma}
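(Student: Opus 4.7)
The plan is to set $D = V(G) \setminus (S \cup T(S))$ and verify that every vertex of $S \cup T(S)$ has a neighbour in $D$. I would treat the cases $v \in S$ and $v \in T(S)$ separately, the first being the substantive one.

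For $v \in S$, I would invoke the standard private-neighbour property of minimum dominating sets: because $S \setminus \{v\}$ cannot dominate, there exists $u \in N[v]$ with $u \notin N[S \setminus \{v\}]$. Assuming, for contradiction, that $N(v) \subseteq S \cup T(S)$, I would split on whether $u=v$ or $u \in N(v) \setminus S$. If $u=v$, then $v$ has no neighbour in $S$, so $N(v) \subseteq T(S)$. If $u \in N(v) \setminus S$, then $u$'s only $S$-neighbour is $v$; since $u \in S \cup T(S)$ but $u \notin S$, we get $u \in T(S)$, and the twin witness $s'$ of $u$ is an $S$-neighbour of $u$, forcing $s'=v$ and $N[u]=N[v]$. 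A short line then rules out any $S$-neighbour of $v$ (such a $w$ would lie in $N[v]=N[u]$ and give $u$ a second $S$-neighbour, namely $w$), hence again $N(v) \subseteq T(S)$.

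Once $N(v) \subseteq T(S)$, the key observation is that the twin witness $s_w$ of any $w \in N(v)$ satisfies $s_w \in N[v] \cap S = \{v\}$, so $N[w] = N[v]$ and $N[v]$ is a clique of true twins of $v$. I would then argue that the connected component of $v$ is exactly $N[v]$: any edge from $x \in N[v]$ to $y \notin N[v]$ is impossible, since either $x = v$ directly forces $y \in N(v) \subseteq N[v]$, or else $x \ne v$ is a true twin of $v$ with $N[x]=N[v]$, again forcing $y \in N[v]$. This component is therefore complete, contradicting the hypothesis on $G$.

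For $v \in T(S)$, I would simply pick $s \in S$ with $N[s] = N[v]$. Since $s$ and $v$ are true twins (in particular adjacent to each other), they share identical neighbourhoods outside $\{s,v\}$, so $N(v) \cap D = N(s) \cap D$, and the $v \in S$ case applied to $s$ supplies a vertex in this intersection. The main obstacle is the private-neighbour bookkeeping in the first case; once the picture has been reduced to $N[v]$ being a clique of twins, the component-is-complete argument that contradicts the hypothesis is clean and essentially automatic.
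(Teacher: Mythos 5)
Your proof is correct; I checked each step and found no gaps. It shares the paper's overall skeleton --- reduce to showing that every $v\in S$ has a neighbour in $D=V(G)\setminus(S\cup T(S))$, with the $T(S)$ case following from the twin relation, and then argue by contradiction from the assumption $N(v)\subseteq S\cup T(S)$ --- but the middle of the argument is organized around a different tool. The paper first observes that $N(v)\cap T(S)\neq\emptyset$ and then splits on whether $N(v)\cap S$ is empty: if it is, the complete-component contradiction is reached directly; if not, it picks $x\in N(v)\cap T(S)$ and argues that $S\setminus\{v\}$ would already be a dominating set, contradicting minimality. You instead invoke the private-neighbour property of minimum dominating sets at the outset and use it (in both of its sub-cases, $u=v$ and $u\in N(v)\setminus S$) to force $N(v)\cap S=\emptyset$, so that every branch funnels into the single complete-component contradiction, and the isolated-vertex situation is absorbed there as the degenerate case $N[v]=\{v\}\cong K_1$. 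Your route is the more uniform of the two and makes the role of minimality explicit through one standard lemma; the paper's route avoids naming that lemma but pays for it with a second, rather tersely justified, contradiction (``$S\setminus\{s\}$ is a dominating set''). Both proofs consume exactly the same two hypotheses --- minimality of $S$ and the absence of complete components --- so neither is more general, but yours is arguably the cleaner write-up.
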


\begin{proof}
Since every vertex in $T(S)$ has a true twin in $S$, we only need to show that every vertex in $S$ has a neighbour in $S'=V(G)\setminus (S\cup T(S))$. 

Notice that,  since $G$ has no isolated vertices and $S$ is a $\gamma(G)$-set, every vertex in $S$ has at least one neighbour outside of $S$.
Suppose that there exists $s\in S$ such that $N(s)\cap S'=\emptyset$.  In such a case, $N(s)\cap T(S)\ne \emptyset$ and, if $N(s)\cap S=\emptyset$, then the subgraph induced by $N[s]$ is a component of $G$, which is a contradiction. Thus, $N(s)\cap S\ne \emptyset$.
%On the other hand, since $G$ is connected and $S$ is  a $\gamma(G)$-set, $\overline{S}$ is a dominating set, which implies that  there exists $x\in N(s)\cap T(S)$. 
Now, let $x\in N(s)\cap T(S)$.
If $s$ and $x$ are true twins, then every neighbour of $s$ belonging to $S$ is a neighbour of $x$, while if $s$ and $x$ are not true twins, then there exists $s''\in S\setminus \{s\}$ which is twin with $x$. Therefore, 
 $S\setminus \{s\}$ is a dominating set, which is  a contradiction.  
\end{proof}

\begin{theorem}\label{SecuredominationVSdominationTau}
 If no component of $G$ is a complete graph, then
$$\gamma_s(G)\le n(G)-\gamma(G)-\tau(G).$$
\end{theorem}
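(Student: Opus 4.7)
My plan is to exhibit, for a $\gamma(G)$-set $S$ realising $|T(S)|=\tau(G)$, a secure dominating set of size $n(G)-\gamma(G)-\tau(G)$; the natural candidate is $W:=V(G)\setminus(S\cup T(S))$. Lemma~\ref{securecomplementlesstau} already gives that $W$ is a dominating set, so the task reduces to security: for every $v\in S\cup T(S)$ I must find a defender $u\in N(v)\cap W$ such that $(W\setminus\{u\})\cup\{v\}$ is also a dominating set.

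First I would reduce to $v\in S$: if $v\in T(S)$ with true twin $t_v\in S$, then $N[v]=N[t_v]$ and $N(v)\cap W=N(t_v)\cap W$, so a defender for $t_v$ also defends $v$. For $v\in S$ I would split on the private neighbourhood $\mathrm{pn}[v,S]$. When $\mathrm{pn}[v,S]\cap W\ne\emptyset$, pick any private $W$-neighbour $u$ of $v$: an undefended vertex $x$ after the swap would force $u\in N(x)$, but since $v$ is the sole $S$-neighbour of $u$, a brief analysis (using that a true twin of a vertex of $V(G)\setminus S$ must lie in $S\cup T(S)$) shows $x\in N[v]$, a contradiction.

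The subtle case is $\mathrm{pn}[v,S]\cap W=\emptyset$. Minimality of $|S|$ combined with the definition of $T(S)$ forces $v$ to be isolated in $G[S]$ and $\mathrm{pn}[v,S]\subseteq\{v\}\cup T(v)$, where $T(v):=\{w\in T(S):N[w]=N[v]\}$. Assume for contradiction that every $u\in N(v)\cap W$ is \emph{bad}, meaning some $x_u\in S\setminus N[v]$ satisfies $N(x_u)\cap W=\{u\}$. The same analysis applied to $x_u$ shows it is also isolated in $G[S]$ with no private $W$-neighbour. I would then consider
\[
S'=\begin{cases}(S\setminus\{v,x_u\})\cup\{u\} & \text{if }T(v)=\emptyset,\\ (S\setminus\{v,x_u\})\cup\{u,y\}\text{ for some }y\in T(v) & \text{if }T(v)\ne\emptyset,\end{cases}
\]
and verify that $S'$ is a dominating set, using that $u$ covers $v,x_u$ and $T(x_u)$; that $y$, when present, covers $v$ and $T(v)$; and that any $z\in W$ with $N(z)\cap S\subseteq\{v,x_u\}$ must equal $u$, since otherwise $z$ would be a second $W$-neighbour of $x_u$ (contradicting $N(x_u)\cap W=\{u\}$) or a private $W$-neighbour of $v$ or $x_u$.

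If $T(v)=\emptyset$, then $|S'|=\gamma(G)-1$, contradicting the minimality of $\gamma(G)$. If $T(v)\ne\emptyset$, then $|S'|=\gamma(G)$, and a twin-class count gives
\[
|T(S')|=|T(S)|-|T(x_u)|+|T_W(u)|,\qquad T_W(u):=\{w\in W\setminus\{u\}:N[w]=N[u]\},
\]
so the maximality $|T(S)|=\tau(G)$ forces $|T_W(u)|\le|T(x_u)|$, and a global count across all bad $u$ (noting that $u\mapsto x_u$ is injective because $N(x_u)\cap W=\{u\}$ recovers $u$) produces the required contradiction. The main obstacle I expect is precisely this last subcase: the maximality of $\tau(G)$ alone only delivers a single inequality for each bad $u$, so closing the argument cleanly needs the global injectivity/count step that forces the collective twin gain to exceed the allowed loss.
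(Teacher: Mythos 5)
Your overall strategy coincides with the paper's: the same candidate set $W=V(G)\setminus(S\cup T(S))$, the same appeal to Lemma~\ref{securecomplementlesstau} for domination, and the same reduction of vertices of $T(S)$ to their true twins in $S$. Your first case (where $\operatorname{pn}[v,S]\cap W\ne\emptyset$) is correct. The genuine gap is in your final sub-case ($\operatorname{pn}[v,S]\cap W=\emptyset$, $T(v)\ne\emptyset$, every $u\in N(v)\cap W$ bad): the maximality of $\tau(G)$ only yields $|T_W(u)|\le|T(x_u)|$ for each bad $u$, and these inequalities are perfectly consistent with the assumed configuration (for instance, all $T_W(u)=\emptyset$), so no ``global count'' over the bad $u$ can extract a contradiction from them. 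You acknowledge this obstacle yourself, and as written the argument does not close.

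The good news is that the detour through $\tau$-maximality is unnecessary. You add $y\in T(v)$ to $S'$ apparently to keep $T(v)$ dominated, but every $w\in T(v)$ satisfies $N[w]=N[v]\ni u$ and $w\ne u$, so $w$ is already adjacent to $u$; likewise $\operatorname{pn}[v,S]\subseteq\{v\}\cup T(v)\subseteq N[u]$, $\operatorname{pn}[x_u,S]\subseteq\{x_u\}\cup T(x_u)\subseteq N[u]$, and any $z\notin S$ with $N[z]\cap S=\{v,x_u\}$ is either $u$ itself (if $z\in W$, since $N(x_u)\cap W=\{u\}$) or a true twin of $x_u$ (it cannot be a twin of $v$ because $x_u\notin N[v]$). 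Hence $(S\setminus\{v,x_u\})\cup\{u\}$ is already a dominating set of cardinality $\gamma(G)-1$, a contradiction, with no case distinction on $T(v)$ at all. The paper closes the analogous situation differently: setting $P(u)=\{w\in S:\;N(w)\cap W=\{u\}\}$ it first shows $|P(u)|\le 1$, and when every $u\in N(v)\cap W$ has $P(u)\ne\emptyset$ it swaps in \emph{all} of $N(v)\cap W$ at once in exchange for $\{v\}\cup\bigcup_{u}P(u)$, again a net saving of one vertex. Either repair works; note that the paper never uses the maximality of $\tau(G)$ beyond the initial choice of $S$.
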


\begin{proof}
Let $S$ be  a $\gamma(G)$-set such that $|T(S)|=\tau(G)$.  We will show that $S'=V(G)\setminus (S\cup T(S))$ is a secure dominating set.  
We already know from Lemma \ref{securecomplementlesstau}  that $S'$ is a dominating set. It remains to show that for every $v\in S\cup T(S)$ there exists $u\in S'\cap N(v)$ such that $S'_{uv}=(S'\setminus \{u\})\cup \{v\}$ is a dominating set. To this end, 
for every $u\in S'$ we define $P(u)$ as follows:
 $$P(u)=\{v\in S:\; N(v)\cap S'=\{u\}\}.$$
 If there exists $u\in S'$ such that $|P(u)|\ge 2$, then $S_1=(S\setminus P(u))\cup \{u\}$ is a dominating set and $|S_1|<|S|=\gamma(G)$, which is a contradiction. Hence, $|P(u)|\le 1$ for every $u\in S'$. With this fact in mind, we differentiate two cases for $v\in V(G)\setminus S'$. 

\vspace{0.3cm}
\noindent Case 1: $v\in S$.  Suppose that $P(u)=\{v\}$ for some $u\in S'$. In this case, for every $w\in N(u)\cap (S\setminus \{v\})$ we have  $|N(w)\cap S'|\ge 2$. So that, if there exists $y\in \left(N(u)\cap T(S)\right) \setminus N(v)$, then $|N(y)\cap S'|\ge 2$, as $y$ has a twin in $S\setminus \{v\}$.
Hence,  $S'_{uv}$ is a dominating set.  From now on we assume that 
%$P(u)\ne\{v\}$ for every $u\in S'$, \emph{i.e.}, 
$|N(v)\cap S'|\ge 2$. Now, if there exists 
 $u'\in N(v)\cap S'$ such that 
$P(u')=\emptyset$, then $|N(w)\cap S'|\ge 2$ for every $w\in N(u')\cap (S\setminus \{v\})$, and also for every $w\in \left(N(u')\cap T(S)\right) \setminus N(v)$, which implies  $S'_{u'v}$ is a dominating set. 
 Finally, suppose that $P(u)\ne \emptyset$ for every $u\in N(v)\cap S'$. Let
 $$ X=\{v\}\cup\left( \bigcup_{u\in N(v)\cap S'}P(u)\right).$$
Notice that $|X|=1+|N(v)\cap S'|$. Hence, 
$S_2=(S\setminus X)\cup (N(v)\cap S')$ is a dominating set of $G$ and $|S_2|<|S|$, which is a contradiction.

\vspace{0.3cm}
\noindent Case 2: $v\in T(S)$. Let $v'\in S$ such that $N[v]=N[v']$. As discussed in Case 1, there exists $u\in S'$ such that $S'_{uv'}$ is a dominating set. Since $v$ and $v'$ are true twins, we can conclude that $S'_{uv}$  is also a dominating set.
 
 According to the two cases above, $S'$ is a secure dominating set. 
Therefore,   $\gamma_s(G)\le |S'|= n(G)-\gamma(G)-\tau(G)$.
\end{proof}

To show an example where Theorem \ref{SecuredominationVSdominationTau} improves the bound given by Theorem \ref{alphaCockayne2003}, we take the graph $G\cong K_3+N_2\cong K_5-e$. In this case $\gamma (G) =1$, $\tau (G)=2$ and $\alpha'(G)=2$, which implies that $\gamma_s(G)\le n(G)-\gamma (G)-\tau (G)=2<3=n(G)-\alpha'(G)$.

A set $X\subseteq V(G)$ is  called a $2$-\textit{packing}
if $N[u]\cap N[v]=\emptyset $ for every pair of different vertices $u,v\in X$.
 The
$2$-\textit{packing number} $\rho(G)$ is the   cardinality
of any   largest $2$-packing of
$G$. A $2$-packing of cardinality $\rho(G)$ is called a $\rho(G)$-set.
It is well known that for any graph $G$, $\gamma(G)\ge \rho(G)$,   \cite{Haynes1998}.
Meir and Moon \cite{MR0401519} showed in 1975 that $\gamma(T)= \rho(T)$ for any tree $T$.  We remark that in general, these $\gamma(T)$-sets and $\rho(T)$-sets  are not identical. The following result is a direct consequence of Theorem \ref{SecuredominationVSdominationTau}.

\begin{corollary}\label{SecuredominationVS2-packing}
 If no component of $G$ is a complete graph, then $$\gamma_s(G)\le n(G)-\rho(G)-\tau(G).$$
\end{corollary}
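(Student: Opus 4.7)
The plan is to chain Theorem~\ref{SecuredominationVSdominationTau} with the standard inequality $\gamma(G)\ge \rho(G)$ that is explicitly recalled in the paragraph immediately preceding the statement (and attributed there to the classical literature). First I would note that the hypothesis of the corollary, namely that no component of $G$ is a complete graph, is precisely the hypothesis of Theorem~\ref{SecuredominationVSdominationTau}, so that theorem applies and yields $\gamma_s(G)\le n(G)-\gamma(G)-\tau(G)$. Then I would invoke $\gamma(G)\ge \rho(G)$, which is equivalent to $-\gamma(G)\le -\rho(G)$, and hence $n(G)-\gamma(G)-\tau(G)\le n(G)-\rho(G)-\tau(G)$. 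Combining the two inequalities gives the desired bound $\gamma_s(G)\le n(G)-\rho(G)-\tau(G)$.

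There is essentially no obstacle here: the corollary is a one-step weakening of Theorem~\ref{SecuredominationVSdominationTau} using the universally valid comparison between the domination number and the $2$-packing number. The only thing worth checking is that the hypothesis transfers cleanly (it does, since the two statements have the same structural assumption on $G$), and that no subtlety arises when $\tau(G)=0$ or when $\rho(G)$ is attained by a set unrelated to the $\gamma(G)$-set $S$ used in the proof of Theorem~\ref{SecuredominationVSdominationTau} — neither issue affects the inequality, since the bound only uses the numerical relation $\gamma(G)\ge \rho(G)$ and not any particular extremal set.
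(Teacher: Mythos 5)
Your proof is correct and is exactly the argument the paper intends: the corollary is presented as a direct consequence of Theorem~\ref{SecuredominationVSdominationTau} together with the standard inequality $\gamma(G)\ge\rho(G)$ recalled just before the statement. The chaining $\gamma_s(G)\le n(G)-\gamma(G)-\tau(G)\le n(G)-\rho(G)-\tau(G)$ is all that is needed, and your check that the hypothesis transfers is fine.
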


To see the sharpness of the bound above, consider the corona graph $G_1\odot N_p$, where $G_1$ is an arbitrary graph. In this case, $n(G_1\odot N_p)=n(G_1)(p+1)$,  $\rho(G_1\odot N_p)=n(G_1)$ and  $\gamma_s(G_1\odot N_p)=n(G_1)p=n(G_1\odot N_p)-\rho(G_1\odot N_p)=n(G_1\odot N_p)-\gamma(G_1\odot N_p)$. From $G'\cong G_1\odot N_2$
we can construct a family of graphs $G$ of order $n(G)=3n(G_1)+l_1+\dots +l_{n(G_1)}$ with $\gamma_s(G)=n(G)-\gamma(G)-\tau(G)$. We construct $G$ from $G'$ and a $\gamma(G')$-set $S=\{v_1,\dots, v_{n(G_1)}\}$ by replacing every $v_j\in S$ with a copy of $K_{l_j}$ and joining by an edge each vertex of  $K_{l_j}$ with each neighbour of $v_j$ in $G'$. 

As shown in \cite{Walikar1979}, the domination number of any graph $G$ is bounded below by $\frac{n(G)}{\Delta(G)+1}$. Therefore, the following result is deduced from Theorem~\ref{SecuredominationVSdominationTau}.

\begin{corollary}\label{SecuredominationVS2-packing}
  If no component of $G$ is a complete graph,  then $$\gamma_s(G)\le \left\lfloor \frac{n(G)\Delta(G)}{\Delta(G) +1} \right\rfloor-\tau(G).$$
\end{corollary}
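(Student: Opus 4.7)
The plan is to combine Theorem \ref{SecuredominationVSdominationTau} with the classical lower bound $\gamma(G)\ge \frac{n(G)}{\Delta(G)+1}$ due to Walikar et al., which is quoted in the paragraph immediately preceding the statement. Since the hypothesis that no component of $G$ is a complete graph is inherited directly from Theorem \ref{SecuredominationVSdominationTau}, there is nothing extra to verify about the structure of $G$; the argument is purely arithmetic.

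Concretely, I would start from Theorem \ref{SecuredominationVSdominationTau} to write
$$\gamma_s(G)+\tau(G)\le n(G)-\gamma(G),$$
and then substitute the Walikar bound $\gamma(G)\ge \frac{n(G)}{\Delta(G)+1}$ to obtain
$$\gamma_s(G)+\tau(G)\le n(G)-\frac{n(G)}{\Delta(G)+1}=\frac{n(G)\Delta(G)}{\Delta(G)+1}.$$

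Finally, since both $\gamma_s(G)$ and $\tau(G)$ are integers, the left-hand side is an integer, so the inequality can be strengthened by applying the floor function on the right, giving
$$\gamma_s(G)+\tau(G)\le\left\lfloor\frac{n(G)\Delta(G)}{\Delta(G)+1}\right\rfloor,$$
from which the claimed bound on $\gamma_s(G)$ follows by rearranging. There is no real obstacle here; the only subtlety is remembering to take the floor only after combining the two inequalities, since $n(G)\Delta(G)/(\Delta(G)+1)$ need not itself be an integer while $n(G)-\gamma(G)-\tau(G)$ always is.
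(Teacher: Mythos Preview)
Your proposal is correct and follows exactly the approach the paper uses: the corollary is stated immediately after recalling the Walikar lower bound $\gamma(G)\ge n(G)/(\Delta(G)+1)$ and is derived directly from Theorem~\ref{SecuredominationVSdominationTau}. Your extra remark about why the floor is justified (integrality of $\gamma_s(G)+\tau(G)$) is a nice clarification that the paper leaves implicit.
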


The bound above is tight. For instance, it is achieved for any graph isomorphic to $K_n-e$. In this case $\tau(G)=n(G)-3$ and $\Delta(G)=n(G)-1$ so $\gamma_s(G)=2$.

Since $\gamma_r(G)\le 2\gamma(G)$ and $\gamma_r(G)\le  \gamma_s(G)$, Theorem \ref{SecuredominationVSdominationTau} leads to the following upper bounds on the weak Roman domination number.

\begin{corollary}\label{SecuredominationVOrderDomination}
  If no component of $G$ is a complete graph, then the following assertions hold.
\begin{enumerate}[{\rm (i)}]
\item $\gamma_r(G)\le \displaystyle\left\lfloor \frac{n(G)+\gamma(G)-\tau(G)}{2} \right\rfloor$.
\item If $\gamma(G)\ge \frac{n(G)}{3}$, then $\gamma_r(G)\le 2\gamma(G)-\tau(G)$.
\end{enumerate}
\end{corollary}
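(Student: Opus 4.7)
The plan is to observe that this corollary is a direct consequence of combining the inequality chain $\gamma_r(G)\le \gamma_s(G)$ and $\gamma_r(G)\le 2\gamma(G)$ from Proposition \ref{DominationChain1} with the upper bound $\gamma_s(G)\le n(G)-\gamma(G)-\tau(G)$ from Theorem \ref{SecuredominationVSdominationTau}. No new combinatorial construction is needed; the whole argument is an elementary manipulation of these three inequalities.

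For part (i), I would start by writing down both of the known upper bounds available for $\gamma_r(G)$, namely
\begin{equation*}
\gamma_r(G)\le 2\gamma(G) \qquad \text{and} \qquad \gamma_r(G)\le \gamma_s(G)\le n(G)-\gamma(G)-\tau(G).
\end{equation*}
Therefore $\gamma_r(G)$ is bounded above by the minimum of these two quantities. The key (trivial) observation is that the minimum of two real numbers is always at most their arithmetic mean, so
\begin{equation*}
\gamma_r(G)\le \frac{2\gamma(G)+n(G)-\gamma(G)-\tau(G)}{2}=\frac{n(G)+\gamma(G)-\tau(G)}{2}.
\end{equation*}
Since $\gamma_r(G)$ is an integer, the floor function can then be inserted to finish the bound stated in (i).

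For part (ii), I would use only the second of the two bounds above, namely $\gamma_r(G)\le \gamma_s(G)\le n(G)-\gamma(G)-\tau(G)$, together with the hypothesis $\gamma(G)\ge n(G)/3$, which is equivalent to $n(G)\le 3\gamma(G)$. Substituting this into the previous inequality gives $\gamma_r(G)\le 3\gamma(G)-\gamma(G)-\tau(G)=2\gamma(G)-\tau(G)$, as required.

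There is no genuine obstacle here: everything rests on already-established results. The only small subtlety worth flagging is that in (i) the floor can indeed be taken because $\gamma_r(G)$ is an integer, and in (ii) the hypothesis $\gamma(G)\ge n(G)/3$ is precisely the threshold that makes the bound in Theorem \ref{SecuredominationVSdominationTau} tighter than (or equal to) $2\gamma(G)-\tau(G)$, explaining why the extra assumption is imposed.
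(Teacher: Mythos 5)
Your proposal is correct and follows exactly the route the paper intends: the paper derives this corollary directly from $\gamma_r(G)\le 2\gamma(G)$, $\gamma_r(G)\le\gamma_s(G)$ and Theorem \ref{SecuredominationVSdominationTau}, with (i) coming from bounding the minimum of the two upper bounds by their average and (ii) from substituting $n(G)\le 3\gamma(G)$. Nothing is missing.
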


To see the sharpness of the bounds above, consider the corona graph $G\cong G_1\odot N_2$, where $G_1$ is an arbitrary graph.    In this case,  $n(G)=3n(G_1)$,   $\gamma(G)=n(G_1)$, $\tau(G)=0$ and   $\gamma_r(G)=2n(G_1)$. Another example of equality for bound (i) is $G\cong K_n-e$, where $\gamma_r(G)=2$, $\tau(G)=n(G)-3$ and $\gamma(G)=1$.

The minimum number of cliques of a given graph $G $ needed to cover the vertex set $V(G)$  is called the \emph{clique covering number}  of  $G$ and denoted by $\theta (G)$. Before stating our next result we need to recall the following theorem, which states a Nordhaus-Gaddum inequality for the chromatic number of a graph. 

\begin{theorem}{\rm \cite{Borowiecki1976}}\label{NordHausGaddumChromatic}
For any graph $G$,
$$\chi(G)+\chi(\overline{G})\le n(G)+1\text{ and }\chi(G)\chi(\overline{G})\le\displaystyle\frac{(n(G)+1)^2}{4}.$$
\end{theorem}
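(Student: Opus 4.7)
The plan is to first reduce the product inequality to the sum inequality via AM--GM: since
$$\chi(G)\chi(\overline{G}) \le \left(\frac{\chi(G)+\chi(\overline{G})}{2}\right)^2,$$
once $\chi(G)+\chi(\overline{G}) \le n(G)+1$ is established, the right-hand side is at most $\frac{(n(G)+1)^2}{4}$. Thus the whole statement reduces to the additive estimate.

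To prove the additive estimate, I would induct on $n=n(G)$. The base case $n=1$ gives $\chi(G)=\chi(\overline{G})=1$, so the bound holds with equality. The observation underlying the induction is elementary: for any vertex $v$ of any graph $H$, one has $\chi(H) \le \chi(H-v)+1$, and whenever equality holds one must have $\delta_H(v) \ge \chi(H)-1$. Indeed, if $v$ had strictly fewer than $\chi(H)-1$ neighbours in $H$, any proper $(\chi(H)-1)$-coloring of $H-v$ would leave some color unused on $N_H(v)$, and this color could be assigned to $v$, contradicting the equality.

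For the inductive step, I distinguish two regimes. In the first, there exists a vertex $v$ with $\chi(G)=\chi(G-v)$ or $\chi(\overline{G})=\chi(\overline{G}-v)$: combining this with the trivial inequality on the other chromatic number (at most one more than on $H-v$) and the induction hypothesis $\chi(G-v)+\chi(\overline{G}-v) \le n$ immediately gives $\chi(G)+\chi(\overline{G}) \le n+1$. In the complementary regime, every vertex $v$ satisfies both $\chi(G)=\chi(G-v)+1$ and $\chi(\overline{G})=\chi(\overline{G}-v)+1$; the key observation then forces $\delta_G(v) \ge \chi(G)-1$ and $\delta_{\overline{G}}(v) \ge \chi(\overline{G})-1$ for every $v$, and summing these two using $\delta_G(v)+\delta_{\overline{G}}(v)=n-1$ yields the desired bound.

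The principal difficulty lies in the second regime, where I must convert the global criticality condition (removing any vertex drops the chromatic number of both $G$ and $\overline{G}$) into a concrete degree lower bound; the first regime is mere bookkeeping. Once the key observation about criticality and minimum degree is isolated as a short lemma, the remainder of the argument is a clean dichotomy driven by the induction hypothesis, and the product bound is a one-line consequence.
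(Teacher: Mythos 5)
Your argument is correct and complete. The paper itself offers no proof of this theorem --- it is quoted as a known result with a citation to Borowiecki, so there is no internal argument to compare against; what you have written is essentially the original Nordhaus--Gaddum induction. The two pillars both check out: the AM--GM reduction of the product bound to the sum bound is immediate, and the dichotomy in the inductive step is genuinely exhaustive, since $\chi(H-v)\le\chi(H)\le\chi(H-v)+1$ forces every vertex not covered by the first regime to be ``critical'' in both $G$ and $\overline{G}$ (note $\overline{G}-v=\overline{G-v}$, which you implicitly use and which is true). Your criticality lemma is also proved correctly: equality $\chi(H)=\chi(H-v)+1$ guarantees the existence of a proper $(\chi(H)-1)$-colouring of $H-v$, and a vertex of degree less than $\chi(H)-1$ could then be coloured with a spare colour, a contradiction. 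In the second regime you only need the two degree inequalities at a single vertex, and $\delta_G(v)+\delta_{\overline{G}}(v)=n-1$ closes the argument. A self-contained proof like this could be inserted in place of the citation without change.
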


\begin{theorem}
 The following statements hold for any graph $G$.
\begin{enumerate}[{\rm (i)}]
\item $\gamma_s(G)\le \theta(G).$
\item $\gamma_r(G)+\gamma_r(\overline{G})\le \gamma_s(G)+\gamma_s(\overline{G})\le n(G)+1.$
\item $\gamma_r(G)\gamma_r(\overline{G})\le \gamma_s(G)\gamma_s(\overline{G})\le\displaystyle\frac{(n(G)+1)^2}{4}.$\end{enumerate}
Furthermore, if $G\not\cong C_5$ is a connected graph with $\delta(G)\ge 2$ and $\Delta(G) \le n(G)-3$, then  the following  statement hold. 
\begin{enumerate}[{\rm (iv)}]
\item
$\gamma_r(G)+\gamma_r(\overline{G})\le \gamma_s(G)+\gamma_s(\overline{G})\le n(G)-1$ for $n(G)$ odd and 

 $\gamma_r(G)+\gamma_r(\overline{G})\le \gamma_s(G)+\gamma_s(\overline{G})\le n(G)$ for $n(G)$ even.
\item $\gamma_r(G)\gamma_r(\overline{G})\le \gamma_s(G)\gamma_s(\overline{G})\le \frac{(n(G)-1)^2}{4}$ for $n(G)$ odd and  

$\gamma_r(G)\gamma_r(\overline{G})\le \gamma_s(G)\gamma_s(\overline{G})\le \frac{(n(G))^2}{4}$ for $n(G)$ even.
\end{enumerate}
\end{theorem}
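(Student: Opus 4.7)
The plan is to prove (i) directly from the definition of the clique covering number, to derive the Nordhaus--Gaddum-type bounds (ii) and (iii) by combining (i) with the analogous inequalities for the chromatic number, and finally to invoke Theorem \ref{UpperBoundnOver2} to sharpen these bounds under the extra hypotheses of (iv) and (v).

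For (i), I would fix a minimum clique cover $C_1,\dots, C_{\theta(G)}$ of $G$ and select a single representative $v_i\in C_i$ per clique. The set $S=\{v_1,\dots,v_{\theta(G)}\}$ is a dominating set because every vertex lies in some $C_i$ and is therefore adjacent to (or equal to) $v_i$. To verify the security condition, suppose $v\in C_j\setminus S$; then $S'=(S\setminus\{v_j\})\cup\{v\}$ is again dominating, since $v$ is adjacent to every other vertex of $C_j$ (because $C_j$ is a clique), while each $v_i$ with $i\neq j$ still dominates $C_i$. Hence $\gamma_s(G)\le \theta(G)$.

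Items (ii) and (iii) follow by combining (i) with the standard identity $\theta(H)=\chi(\overline{H})$, which gives $\gamma_s(G)\le \chi(\overline{G})$ and $\gamma_s(\overline{G})\le \chi(G)$; Theorem \ref{NordHausGaddumChromatic} then yields both the sum and the product bound on $\gamma_s(G)+\gamma_s(\overline{G})$ and $\gamma_s(G)\gamma_s(\overline{G})$, respectively. The left-hand inequalities in (ii) and (iii) are immediate consequences of Proposition \ref{DominationChain1} (ii) applied to $G$ and to $\overline{G}$.

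For (iv) and (v), the hypotheses on $G$ are tailored so that its complement also meets the structural requirements of Theorem \ref{UpperBoundnOver2}: one computes $\delta(\overline{G}) = n(G)-1-\Delta(G) \ge 2$ and $\Delta(\overline{G}) = n(G)-1-\delta(G) \le n(G)-3$. Granted that the theorem applies to both $G$ and $\overline{G}$, we obtain $\gamma_s(G),\gamma_s(\overline{G})\le \lfloor n(G)/2\rfloor$, and a short parity analysis of $\lfloor n(G)/2\rfloor + \lfloor n(G)/2\rfloor$ and of $\lfloor n(G)/2\rfloor \cdot \lfloor n(G)/2\rfloor$ produces the even and odd cases of (iv) and (v). The main obstacle is a technical one: Theorem \ref{UpperBoundnOver2} is stated for connected graphs different from $C_5$, and neither property transfers automatically from $G$ to $\overline{G}$. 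I would handle this by applying the bound componentwise to $\overline{G}$ (each component has minimum degree at least two, so order at least three, hence the component bound $\lfloor n_C/2\rfloor$ sums to at most $\lfloor n(G)/2\rfloor$) and by using $\delta(G)\ge 2$, $\Delta(G)\le n(G)-3$, and $G\not\cong C_5$ to exclude $C_5$ components in $\overline{G}$; any residual exceptional configurations would be verified by direct computation.
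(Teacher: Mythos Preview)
Your proof of (i)--(iii) is essentially identical to the paper's: choose one vertex per clique in a minimum clique cover to obtain a secure dominating set, then feed $\theta(G)=\chi(\overline{G})$ into Theorem~\ref{NordHausGaddumChromatic}. For (iv) and (v) the paper says only that they are ``a direct consequence of Theorem~\ref{UpperBoundnOver2}''; you follow the same route but are more explicit, and you correctly flag that the hypotheses of Theorem~\ref{UpperBoundnOver2} (connectedness, not isomorphic to $C_5$) must be checked for $\overline{G}$ as well.

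However, your proposed resolution has a genuine gap. The hypotheses on $G$ do \emph{not} exclude $C_5$-components in $\overline{G}$. Take $G=C_5+C_5$, the join of two $5$-cycles: then $n(G)=10$, $\delta(G)=\Delta(G)=7=n(G)-3$, $G$ is connected, and $G\not\cong C_5$, yet $\overline{G}=C_5\cup C_5$. Here $\gamma_s(\overline{G})=\gamma_s(C_5)+\gamma_s(C_5)=6>5=\lfloor n(G)/2\rfloor$, so the componentwise application of Theorem~\ref{UpperBoundnOver2} that you propose does not yield $\gamma_s(\overline{G})\le\lfloor n(G)/2\rfloor$. The statements (iv) and (v) still hold in this example because $\gamma_s(G)=2$ is small enough to compensate (the sum is $8\le 10$), but neither your argument nor the paper's one-line proof actually covers such configurations; a separate argument is needed when $\overline{G}$ is disconnected with $C_5$-components, exploiting the fact that $G$ is then a join and hence has small secure domination number.
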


\begin{proof}
Let $\Pi $ be a partition of $V(G)$ into cliques such that $|\Pi|=\theta(G)$. 
The proof of (i) directly follows from the fact that any set formed by one representative of each clique in $\Pi$ is a secure dominating set.  

Since  $\chi(G)=\theta(\overline{G})$, (i) and Theorem \ref{NordHausGaddumChromatic} lead to $$\gamma_s(G)+\gamma_s(\overline{G})\le \theta(G)+\theta(\overline{G})=\chi(\overline{G}) + \chi(G)\le n(G)+1$$ and $$\gamma_s(G)\gamma_s(\overline{G})\le \theta(G)\theta(\overline{G})=\chi(\overline{G})\chi(G)\le\displaystyle\frac{(n(G)+1)^2}{4},$$as required. Finally, (iv) and (v) are a direct consequence of Theorem \ref{UpperBoundnOver2}.
\end{proof}

The inequalities above are tight. For instance, (i) is achieved by the graphs 
shown in Figure \ref{selfcmplementary},  (ii) and (iii) are achieved by the self-complementary graph shown in Figure \ref{selfcmplementary} (on the left) and also by $C_5$. In both cases we have $n(G)=5$ and $\gamma_r(G)=\gamma_s(G)=3$. Finally, (iv) and (v) are achieved by the self-complementary graph shown in  Figure \ref{selfcmplementary} (on the right), in this case we have $n(G)=8$ and $\gamma_r(G)=\gamma_s(G)=4$.

\begin{figure}[h]\label{selfcmplementary}
\begin{center}
\begin{tikzpicture} [transform shape, inner sep = .7mm]
\node[draw=black, shape=circle, fill=black] (n11) at (-8,0.5) [thick] {};
\node[draw=black, shape=circle, fill=black] (n21) at (-7,-0.5) [thick] {};
\node[draw=black, shape=circle, fill=black] (n31) at (-6,0.5) [thick] {};
\node[draw=black, shape=circle, fill=black] (n41) at (-5,-0.5) [thick] {};
\node[draw=black, shape=circle, fill=black] (n51) at (-4,0.5) [thick] {};
\draw (n11)--(n21)--(n31)--(n41)--(n51);  \draw (n21)--(n41); 

\node[draw=black, shape=circle, fill=black] (n1) at (1,1) [thick] {};
\node[draw=black, shape=circle, fill=black] (n2) at (-1,1) [thick] {};
\node[draw=black, shape=circle, fill=black] (n3) at (-1,-1) [thick] {};
\node[draw=black, shape=circle, fill=black] (n4) at (1,-1) [thick] {};
\node[draw=black, shape=circle, fill=black] (n5) at (0,2) [thick] {};
\node[draw=black, shape=circle, fill=black] (n6) at (-2,0) [thick] {};
\node[draw=black, shape=circle, fill=black] (n7) at (0,-2) [thick] {};
\node[draw=black, shape=circle, fill=black] (n8) at (2,0) [thick] {};
\draw (n1)--(n2)--(n3)--(n4)--(n1)--(n5)--(n2)--(n4)--(n8)--(n1)--(n3)--(n7)--(n4); \draw (n2)--(n6)--(n3);
\end{tikzpicture}
\caption{Two self-complementary graphs.}
\end{center}
\end{figure}
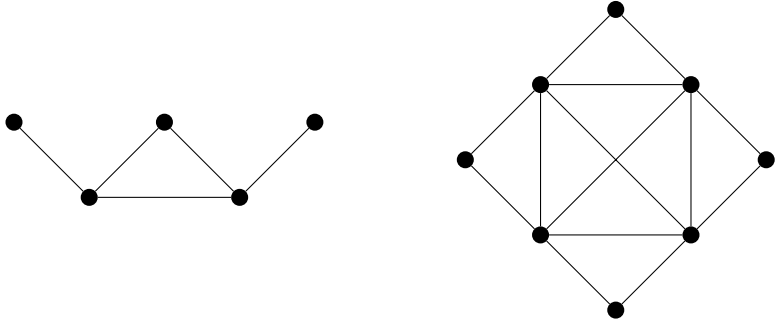

\section{Results on   Cartesian product   graphs}\label{SectionCartesianProduct}

The \textit{Cartesian product} of two graphs $G$ and $H$ is the graph $G\Box H$, such that $V(G\Box H)=V(G)\times V(H)$ and two vertices $(g,h),(g',h')\in V(G\Box H)$ are adjacent in $G\Box H$ if and only if either
\begin{itemize}
\item $g=g'$ and $hh'\in E(H)$, or
\item $gg'\in E(G)$  and $h=h'$.
\end{itemize}

The Cartesian product is a straightforward and natural construction, and is in many respects the simplest graph product \cite{Hammack2011,Imrich2000}. Hypercubes, Hamming graphs, grid graphs, cylinder graphs and torus graphs are some particular cases of this product. The \emph{Hamming graph} $H_{k,t}$ is the Cartesian product of $k$ copies of the complete graph $K_t$. %\emph{i.e.},
%\[\begin{array}{c}
%            H_{k,n}=\underbrace{K_n\;\Box\; K_n\;\Box\; \cdots\; \Box\; K_n} \\
%            \;\;\;\;\;\;\;\;\;\;k \mbox{ times}
%          \end{array}
%\]
The \emph{hypercube} $Q_t$ is defined as $H_{t,2}$. Moreover, the \emph{grid graph} $P_k\Box P_t$ is the Cartesian product of the paths $P_k$ and $P_t$, the \emph{cylinder graph} $C_k\Box P_t$ is the Cartesian product of the cycle $C_k$ and the path $P_t$, and the \emph{torus graph} $C_k\Box C_t$ is the Cartesian product of the cycles $C_k$ and $C_t$.

This operation is commutative  in the sense that $G\Box H \cong H\Box G$, and is also associative, as the graphs $(F \Box G)\Box H$ and $F\Box (G\Box H)$ are naturally isomorphic. A Cartesian product  graph is connected if and only if both of its factors are connected.

Notice that  for any $u\in V(G)$  and $v\in V(H)$  the subgraph of $G\Box H$ induced by $\{u\}\times V(H)$ is isomorphic to $H$ and the subgraph of $G\Box H$ induced by $V(G)\times \{v\}$ is isomorphic to $G$.
%For simplicity, we will denote this subgraphs by $H_u$ and $G_v$, respectively.
%For any $u \in V(G)$ and any WRDF $f$ on $G\Box  H$ we define $$f(H_u)=\sum_{v\in V(H)}f(u,v) \text{ and }f[H_u]=\sum_{x\in N[u]}f(H_x).$$
%Analogously, for any $v\in V(H)$,
%$$f(G_v)=\sum_{u\in V(G)}f(u,v) \text{ and }f[G_v]=\sum_{y\in N[v]}f(G_y).$$

This product has been extensively investigated from various perspectives. For instance, the most popular open problem in the area of domination theory is known as Vizing's conjecture. Vizing \cite{Vizing1968} suggested that for any graphs $G$ and $H$,
$$\gamma(G\Box H)\ge \gamma(G)\gamma(H).$$
Several researchers have worked on it, for instance, some partial results appears in \cite{Bresar2012,Hammack2011}.  For more information on structure and properties of the Cartesian product of graphs we refer the reader to \cite{Hammack2011,Imrich2000}.

The study of the secure domination number of Cartesian product graphs was initiated by Cockay\-ne et al.\ in \cite{MR2137919}, where  they obtained bounds on $\gamma_s(C_k\Box C_{t})$ and $\gamma_s(P_k\Box P_{t})$ in terms of $k$ and $t$. 
Before stating our first result we need to recall the following well known lower bound on the domination number of any Cartesian product graph. 

\begin{lemma}
{\rm \cite{MR1110240}}\label{LowerBoundDominationCartesian} For any pair of graphs $G$ and $H$, 
$$\gamma(G\Box H)\ge \min\{n(G),n(H)\}.$$
\end{lemma}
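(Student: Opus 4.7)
The plan is to prove the bound by contradiction in spirit via a case split on how the dominating set distributes across the ``vertical fibres'' of the product. Without loss of generality I would assume $n(G)\le n(H)$, so that the right-hand side is $n(G)$, and take $D\subseteq V(G\Box H)$ to be an arbitrary dominating set; the goal becomes $|D|\ge n(G)$. For every $u\in V(G)$ let $H_u=\{u\}\times V(H)$ denote the subgraph induced by the $u$-th copy of $H$ (a ``column''), and for every $v\in V(H)$ let $G_v=V(G)\times\{v\}$ denote the corresponding ``row''. Recall that within $G\Box H$ the neighbours of $(u,v)$ are exactly the vertices $(u,v')$ with $vv'\in E(H)$ together with the vertices $(u',v)$ with $uu'\in E(G)$.

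The argument then splits into two cases. \textbf{Case 1:} $D\cap H_u\ne\emptyset$ for every $u\in V(G)$. Since the columns $\{H_u:u\in V(G)\}$ are pairwise disjoint and there are $n(G)$ of them, this immediately gives $|D|\ge n(G)$, as required. \textbf{Case 2:} there exists $u\in V(G)$ with $D\cap H_u=\emptyset$. For each $v\in V(H)$ the vertex $(u,v)$ must be dominated by $D$; its column-neighbours $(u,v')$ are all outside $D$ by assumption, so domination must come from a row-neighbour $(u',v)\in D$ with $u'\in N_G(u)$. As $v$ varies over $V(H)$, these dominators lie in pairwise distinct rows $G_v$, hence are pairwise distinct, yielding $|D|\ge n(H)\ge n(G)$.

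Combining the two cases proves the lemma. I would close by noting sharpness: the bound is tight for $G=K_2$ and $H$ arbitrary, since then choosing a dominating set of one copy of $H$ and pairing it with any vertex of $K_2$ dominates the whole prism, giving $\gamma(G\Box H)=\gamma(H)$ in favourable cases (and the inequality is witnessed by $n(G)=2$). There is no genuine obstacle here; the only place where care is needed is in observing that in Case~2 the dominators produced for different values of $v$ really are distinct, which is forced by the fact that the second coordinate of a dominator of $(u,v)$ coming from outside the column must equal $v$.
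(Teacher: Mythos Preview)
The paper does not give its own proof of this lemma; it is simply quoted from El-Zahar and Pareek \cite{MR1110240}. So there is nothing to compare your argument against within the paper.

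Your proof itself is correct and is in fact the standard one. The case split on whether every column $H_u$ meets $D$ is exactly the right idea, and your observation that in Case~2 the dominators of the $(u,v)$ for varying $v$ are forced to lie in distinct rows $G_v$ (hence are distinct) is the key point. One small edge case you pass over in silence: if $u$ happens to be an isolated vertex of $G$, then in Case~2 the vertex $(u,v)$ has no row-neighbours at all and hence cannot be dominated, contradicting the choice of $D$; so Case~2 cannot occur with such a $u$. Your sentence ``domination must come from a row-neighbour'' implicitly covers this, but it would be cleaner to say so.

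Your closing sharpness remark is a bit garbled: for $G=K_2$ and $H=K_2$ one has $\gamma(K_2\Box K_2)=\gamma(C_4)=2=\min\{n(G),n(H)\}$, which is the clean witness you want, but the phrase ``giving $\gamma(G\Box H)=\gamma(H)$ in favourable cases'' is not what you mean (for $H=K_2$ that would say $\gamma(C_4)=1$). This does not affect the proof of the lemma, only the commentary.
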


\begin{theorem}\label{MainUpperBoundWRD-Cartesian}
For any graphs  $G$ and $H$, the following statements hold.
\begin{enumerate}[{\rm (i)}]
\item $\min\{n(G),n(H)\}\le \gamma_r(G \Box H)\le \min \{n(G)\gamma_r(H),n(H)\gamma_r(G)\}.$

\item $\min\{n(G),n(H)\}\le \gamma_s(G \Box H)\le \min \{n(G)\gamma_s(H),n(H)\gamma_s(G)\}.$
\end{enumerate} 

\end{theorem}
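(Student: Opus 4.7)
The lower bounds follow immediately from known facts: by Proposition \ref{DominationChain1} we have $\gamma(G\Box H)\le\gamma_r(G\Box H)\le\gamma_s(G\Box H)$, and Lemma \ref{LowerBoundDominationCartesian} gives $\gamma(G\Box H)\ge\min\{n(G),n(H)\}$. Hence the plan is to concentrate on the two upper bounds, and by commutativity of the Cartesian product it suffices to establish $\gamma_r(G\Box H)\le n(G)\gamma_r(H)$ and $\gamma_s(G\Box H)\le n(G)\gamma_s(H)$.

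For the upper bound on $\gamma_r(G\Box H)$, the plan is to lift a $\gamma_r(H)$-function $g$ vertically into every $H$-fiber. Concretely, I would define $f:V(G\Box H)\to\{0,1,2\}$ by $f(u,v)=g(v)$ for all $(u,v)\in V(G)\times V(H)$, so that $w(f)=n(G)\,\gamma_r(H)$. To verify that $f$ is a WRDF, take any $(u,v)$ with $f(u,v)=0$; then $g(v)=0$, so there exists $v'\in N_H(v)$ with $g(v')\in\{1,2\}$ for which the shifted function $g'$ (obtained by setting $g'(v)=1$, $g'(v')=g(v')-1$) has no undefended vertex in $H$. I would then take the shift of $f$ along the edge $(u,v)(u,v')$ and check, by splitting into the cases ``the vertex tested lies in $\{u\}\times V(H)$'' and ``it lies outside that fiber'', that no undefended vertex appears: in the first case I appeal directly to the fact that $g'$ has no undefended vertex in $H$, and in the second case the restriction of $f$ to the fiber $\{a\}\times V(H)$ (with $a\ne u$) is untouched and equals $g$, which is itself a WRDF on $H$.

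For the upper bound on $\gamma_s(G\Box H)$, the plan is the analogous lifting on the set level: choose a $\gamma_s(H)$-set $S$ and set $D=V(G)\times S$, so that $|D|=n(G)\gamma_s(H)$. That $D$ dominates $V(G\Box H)$ is immediate from $S$ dominating $V(H)$, fiber by fiber. For the secure property, given $(u,v)\notin D$ (so $v\notin S$), I would choose $v'\in S\cap N_H(v)$ such that $S_v:=(S\setminus\{v'\})\cup\{v\}$ dominates $V(H)$, and then show that $D':=(D\setminus\{(u,v')\})\cup\{(u,v)\}$ dominates $V(G\Box H)$. The verification splits naturally into: the vertex $(u,v')$ itself (dominated by its neighbor $(u,v)\in D'$); any vertex $(u,b)$ in the affected fiber (handled by applying the $H$-domination of $S_v$ to $b$); and any vertex $(a,b)$ with $a\ne u$ (handled by the unchanged $H$-domination of $S$, noting that $(a,b')\ne(u,v')$ for any chosen $b'\in S\cap N_H[b]$).

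The main obstacle is bookkeeping in the secure verification: one must be careful that when the guard at $(u,v')$ moves to $(u,v)$, the guards in fibers $\{a\}\times V(H)$ with $a\ne u$ are untouched, so those fibers are dominated by $V(G)\times S$, while the one affected fiber is dominated precisely because the $H$-guard movement from $v'$ to $v$ is legal in $H$. Once this case split is articulated cleanly, both upper bounds, and hence the theorem, follow.
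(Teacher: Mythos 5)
Your proposal is correct and follows essentially the same route as the paper: both lift an optimal function (or set) of one factor uniformly across all fibers isomorphic to that factor, verify the weak-Roman/secure condition fiberwise while noting the other fibers are untouched, and obtain the lower bound from Lemma \ref{LowerBoundDominationCartesian} together with $\gamma(G\Box H)\le\gamma_r(G\Box H)\le\gamma_s(G\Box H)$. The only cosmetic difference is that you lift a $\gamma_r(H)$-function into the $H$-fibers while the paper lifts a $\gamma_r(G)$-function into the $G$-fibers and then invokes commutativity, which is the same argument up to swapping the roles of the factors.
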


\begin{proof}
Let $f(U_0,U_1,U_2)$ be a $\gamma_r(G)$-function. In order to prove the upper bound, we claim that the function $g:V(G\Box H)\longrightarrow \{0,1,2\}$ defined by 
$g(x,y)=f(x)$ is a WRDF on $G\Box H$, where $$\{W_0=U_0\times V(H),W_1=U_1\times V(H),W_2=U_2\times V(H)\}$$ is the partition of $V(G\Box H)$ associated to $g$. To see this we only need to observe the following two facts.

\noindent Fact (a): Since every $x\in U_0$ is dominated by some $x'\in U_1\cup U_2$, every $(x,y)\in W_0$ is dominated by $(x',y)\in W_1\cup W_2$. 
\\
\noindent Fact (b): Since for every $x\in U_0$ there exists  $x'\in N(x)\cap (U_1\cup U_2)$ such that the movement of a guard from $x'$ to $x$ does not produce undefended vertices in $G$,    the movement of a guard from  $(x',y)\in W_1\cup W_2$  to $(x,y)\in W_0$ does not produce undefended vertices in the subgraph of $G\Box H$ induced by $V(G)\times \{y\}$, which is isomorphic to $G$.

According to Facts (a) and (b) we can conclude that $g$ is a WRDF on $G\Box H$, which implies that
$\gamma_r(G \Box H)\le w(g)=n(H)w(f)=n(H)\gamma_r(G),$ as required. By analogy we deduce that $\gamma_r(G \Box H)\le n(G)\gamma_r(H).$ Therefore, the upper bound of (i) follows. The proof of the upper bound of (ii) is deduced by analogy to the previous one  by taking a   WRDF  $f(U_0,U_1,U_2)$ such that  $U_2=\emptyset$ and $|U_1|=\gamma_s(G)$.

Finally, 
the lower bounds are deduced from Lemma \ref{LowerBoundDominationCartesian}, as $\gamma_s(G \Box H)\ge \gamma_r(G \Box H)\ge \gamma(G \Box H)\ge  \min\{ n(G),n(H)\}.$ 
\end{proof}

As we well show in the following results, the bounds above are tight.

\begin{corollary}
\label{WRDNumberCartesianCompleteTimesH}
Let $t$ be an integer. If  $2 \le n(H)\le t $,  then $ \gamma_r(K_t \Box H)=\gamma_s(K_t \Box H)=n(H).$
\end{corollary}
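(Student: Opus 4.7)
My plan is to derive both equalities directly from Theorem \ref{MainUpperBoundWRD-Cartesian} together with two easy observations about the factor $K_t$.

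First, for the lower bound, note that the hypothesis $n(H)\le t=n(K_t)$ gives $\min\{n(K_t),n(H)\}=n(H)$, so Theorem \ref{MainUpperBoundWRD-Cartesian}(i) yields
\[
\gamma_r(K_t\Box H)\ge n(H),
\]
and by Proposition \ref{DominationChain1}(ii), $\gamma_s(K_t\Box H)\ge \gamma_r(K_t\Box H)\ge n(H)$.

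Second, for the upper bound I would compute $\gamma_s(K_t)=\gamma_r(K_t)=1$ for every $t\ge 2$. Indeed, any singleton $\{v\}\subseteq V(K_t)$ is a dominating set, and for any other vertex $u$ the set $(\{v\}\setminus\{v\})\cup\{u\}=\{u\}$ is again a dominating set of $K_t$, because $K_t$ is complete. Hence $\{v\}$ is a secure dominating set, and a fortiori a WRDF when we set $f(v)=1$ and $f=0$ elsewhere. Plugging this into the upper bound of Theorem \ref{MainUpperBoundWRD-Cartesian}(ii) gives
\[
\gamma_s(K_t\Box H)\le n(H)\,\gamma_s(K_t)=n(H),
\]
and similarly the upper bound of (i) gives $\gamma_r(K_t\Box H)\le n(H)\gamma_r(K_t)=n(H)$. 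Combining with the lower bound yields $\gamma_r(K_t\Box H)=\gamma_s(K_t\Box H)=n(H)$, as desired.

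There is essentially no obstacle here: the whole content is that $K_t$ has $\gamma_s(K_t)=1$, so the generic construction in the proof of Theorem \ref{MainUpperBoundWRD-Cartesian} (placing one guard on a fixed vertex of $K_t$ in each $K_t$-fiber of $K_t\Box H$, i.e.\ on a transversal of the form $\{(g_{i},h_i):1\le i\le n(H)\}$ for some choice of distinct $g_i\in V(K_t)$) already matches the lower bound coming from Lemma \ref{LowerBoundDominationCartesian}. If one preferred a self-contained verification instead of citing $\gamma_s(K_t)=1$, one could display this transversal explicitly and check that for any $(g_j,h_k)$ outside it, moving the guard from $(g_k,h_k)$ to $(g_j,h_k)$ preserves the dominating property, since every $K_t$-fiber $V(K_t)\times\{h_l\}$ with $l\ne k$ still contains $(g_l,h_l)$, and the fiber at $h_k$ is dominated by the new guard.
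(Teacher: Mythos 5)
Your proof is correct and follows exactly the route the paper intends: the corollary is stated as an immediate consequence of Theorem \ref{MainUpperBoundWRD-Cartesian}, with the lower bound coming from $\min\{t,n(H)\}=n(H)$ and the upper bound from $\gamma_r(K_t)=\gamma_s(K_t)=1$. The explicit transversal you describe at the end is just the construction from the theorem's proof specialized to $G\cong K_t$, so nothing is missing.
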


According to the result above, it remains to study the weak Roman domination number and the secure domination number of $K_t \Box H$ for $n(H)>t$. Our next result covers two particular cases.

\begin{proposition}
For any integers $t\ge 3$ and $t'\ge 3$, 
$$\gamma_r(K_t\Box C_{t'})=\gamma_r(K_t\Box P_{t'})=\gamma_s(K_t\Box P_{t'})=\gamma_s(K_t\Box C_{t'})=t'.$$
\end{proposition}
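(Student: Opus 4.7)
I would apply Theorem~\ref{MainUpperBoundWRD-Cartesian}(ii) with $G=K_t$: since $\gamma_s(K_t)=1$ (any single vertex of $K_t$ is a secure dominating set), we obtain $\gamma_s(K_t\Box H)\le n(H)\gamma_s(K_t)=t'$ for $H\in\{P_{t'},C_{t'}\}$, and Proposition~\ref{DominationChain1}(ii) then gives $\gamma_r(K_t\Box H)\le\gamma_s(K_t\Box H)\le t'$.

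\textbf{Lower bound: reduction and constraints.} The real task is $\gamma_r(K_t\Box H)\ge t'$. Note that Lemma~\ref{LowerBoundDominationCartesian} only yields $\gamma(K_t\Box H)\ge\min\{t,t'\}$, and when $t'>t$ the domination number can be strictly smaller than $t'$ (for example $\{(1,1),(2,3),(3,3),(1,5)\}$ dominates $K_3\Box P_5$), so the weak Roman structure has to enter essentially. Given a WRDF $f(V_0,V_1,V_2)$, write $V^+=V_1\cup V_2$ and set the column weight $g(j)=\sum_{i=1}^{t}f((i,j))$; it is enough to prove $\sum_j g(j)\ge t'$. Because each column induces a $K_t$, whenever $g(j)=0$ no in-column vertex can defend any $(i,j)\in V_0$, so for every $i$ the defender lies in column $j\pm 1$, which gives $\{i:(i,j-1)\in V^+\}\cup\{i:(i,j+1)\in V^+\}=[t]$ and hence $g(j-1)+g(j+1)\ge t\ge 3$. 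This rules out three consecutive zero columns and forces $g(j-1)\ge t$ and $g(j+2)\ge t$ whenever $j,j+1$ are both zero. The WRDF ``move'' condition sharpens this in tight cases: if $g(j)=0$ and the unique $V^+$-vertex of column $j-1$ has $f$-value $1$, then moving it to $(i,j)$ would leave every other vertex of column $j-1$ without an in-column defender, so $g(j-2)\ge t-1$ (symmetrically on the right).

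\textbf{Charging argument and main obstacle.} I would finish by a charging argument: charge each zero column one unit of debt against the surplus $g(k)-1$ of a suitable non-zero neighbour. Isolated zero columns are paid by the combined surplus $g(j-1)+g(j+1)-2\ge t-2\ge 1$; pairs of consecutive zeros by the flank surpluses $g(j-1)-1,g(j+2)-1\ge t-1\ge 2$; and in configurations where the primary domination bound is tight the secondary inequality $g(j\pm 2)\ge t-1$ re-routes the debt one column further along. Summing yields $\sum_j g(j)\ge |N|+|Z|=t'$, where $N,Z$ denote the sets of non-zero and zero columns. The path case $H=P_{t'}$ differs only at the endpoints, where $g(1)=0$ forces $g(2)\ge t$ (and symmetrically at $j=t'$), which slots into the same scheme. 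The main obstacle is the bookkeeping of the charging: one must verify that the secondary WRDF inequality always supplies enough surplus so that no column is asked to donate beyond its excess, which is most delicate on alternating patterns of weight-one and isolated-zero columns.
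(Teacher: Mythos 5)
Your upper bound is correct and matches the paper's in substance; note that the paper also invokes Proposition~\ref{RemovingEdges} to get $\gamma_r(K_t\Box C_{t'})\le\gamma_r(K_t\Box P_{t'})\le t'$ and $\gamma_s(K_t\Box C_{t'})\le\gamma_s(K_t\Box P_{t'})\le t'$, so that only the cycle requires a lower bound and your separate endpoint analysis for $P_{t'}$ is unnecessary. Your lower-bound strategy --- column weights $g(j)=f(V(K_t)\times\{v_j\})$, local inequalities extracted from domination and from forced guard moves, then a counting argument over columns --- is exactly the paper's. The difference is that the paper closes the count and you do not: you end by naming ``alternating patterns of weight-one and isolated-zero columns'' as the delicate case and explicitly leave the charging unverified. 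That is a genuine gap, and it sits precisely where your local inequality is weaker than what actually holds.

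Concretely, in the configuration $g(j-1)=g(j+1)=0$ and $g(j)=1$, the single guard in column $j$ cannot move into column $j+1$ at all: after such a move, every vertex $(u,v_j)$ with $u$ different from the guard's row has all of its neighbours empty (its own column and column $j-1$ are now guard-free, and its only neighbour in column $j+1$ is in its own row). Hence \emph{every} vertex of column $j+1$, including the one in the guard's row, must be defended from column $j+2$, which yields $g(j+2)\ge t$ --- not the $t-1$ of your secondary inequality --- and symmetrically $g(j-2)\ge t$. The window $\{j-2,\dots,j+2\}$ then carries weight at least $2t+1\ge 7$, and in particular columns $j\pm2$ cannot themselves have weight one, so the alternating pattern you flag as the main obstacle simply cannot occur. (Your secondary inequality is also stated too loosely: the WRDF condition only requires that \emph{some} neighbour can supply a working move, so before concluding $g(j-2)\ge t-1$ you must first rule out a defender in column $j+1$ for the relevant row.) The paper completes the picture with one further case, $g(j)=2$ flanked by two zero columns, where a forced-move argument gives $g(j-2)+g(j+2)\ge 2(t-1)$ and hence window weight at least $2t\ge 6$; together with the basic inequality $g(j-1)+g(j)+g(j+1)\ge t\ge 3$ for a zero column, every deficient column is absorbed into a window of length at most five of average weight at least one, and $\sum_j g(j)\ge t'$ follows. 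Until you derive these sharper constraints and carry out the discharging explicitly, the proof is a plan rather than an argument.
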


\begin{proof}
By Theorem \ref{MainUpperBoundWRD-Cartesian} and Propositions  \ref{DominationChain1} and \ref{RemovingEdges}   we have that $$\min\{t,t'\}\le\gamma_r(K_t\Box C_{t'}) \le \gamma_r(K_t\Box P_{t'})\le t'$$ and $$\min\{t,t'\}\le \gamma_r(K_t\Box C_{t'}) \le \gamma_s(K_t\Box C_{t'})\le   \gamma_s(K_t\Box P_{t'})\le t'.$$ It remains to show that  $\gamma_r(K_t\Box C_{t'})\ge t'$ for $t' > t\ge3$.   Let $f(W_0,W_1,W_2)$ be a  $\gamma_r(K_t\Box C_{t'})$-function and  $V(C_{t'})=\{v_1,\dots , v_{t'}\}$, where the subscripts are taken modulo $t'$ and $v_iv_{i+1}\in E(C_{t'})$ for any $i\le t'$. Let $A_i=(V(K_t)\times \{v_i\})$ and $\alpha_i=f(A_i)$  for every $i\in \{1,\dots, t'\}$.
We differentiate the following cases in which $\alpha_i=0$ for some $i$. Symmetric cases are omitted.
\\
\\
\noindent Case 1: $\alpha_i=0$. Since $W_1\cup W_2$ is a dominating set, we can conclude that 
$$\alpha_{i-1}+\alpha_i+\alpha_{i+1}\ge t\ge 3.$$

\noindent Case 2:  $\alpha_{i-1}=\alpha_{i+1}=0$ and  $\alpha_{i}=1$. In this case, no guard can move from $A_i$ to
$A_{i+1}$ (or to $A_{i-1}$), which implies that $\alpha_{i-2}\ge t$ and $\alpha_{i+2}\ge t$. Hence,  we can conclude that 
$$\alpha_{i-2}+\alpha_{i-1}+\alpha_{i}+\alpha_{i+1}\ge t+1\ge 4 \text{ and } \alpha_{i-1}+\alpha_{i}+\alpha_{i+1}+\alpha_{i+2}\ge 1+t\ge 4. $$
In this case, if $t'\ge 6$, then 
$$\alpha_{i-2}+\alpha_{i-1}+\alpha_{i}+\alpha_{i+1}+\alpha_{i+2}\ge 2t+1\ge 7. $$

\noindent Case 3: $\alpha_{i}= 2$ and  $\alpha_{i-1}=\alpha_{i+1}=0$. From Case 1 we know that $\alpha_{i-2}\ge t-2$ and $\alpha_{i+2}\ge t-2$. Suppose that $\alpha_{i-2}= t-2$ and $\alpha_{i+2}< t$.
Notice  that  $W_2\cap (A_{i-2}\cup A_{i})=\emptyset$, as every vertex in $A_{i-1}$ has to be dominated by some vertex in $W_1\cup W_2$. Hence, for $(u,v_i),(u',v_i)\in V_1$ we have  that $(u,v_{i-2}),(u',v_{i-2})\in V_0$ and  $(u,v_{i+2})\in V_0$ or $(u',v_{i+2})\in V_0$, as $\alpha_{i-2}= t-2$ and $\alpha_{i+2}<t.$ We can assume that $(u,v_{i+2})\in V_0$. Thus, the movement of a guard form $(u,v_i)$  to $(u,v_{i-1})$  produces undefended vertices in $A_{i+1}$,  which is a contradiction. Hence, $\alpha_{i-2}+\alpha_{i+2}\ge 2(t-1)$ and so we can conclude that 
$$\alpha_{i-2}+\alpha_{i-1}+\alpha_{i}+\alpha_{i+1}+\alpha_{i+2}\ge 2t\ge 6.$$

According  to the conclusions derived from the cases above we can deduce that, 
$$\gamma_r(K_t\Box C_{t'})=w(f)=
\sum_{i=1}^{t'}\alpha_{i}\ge t'.$$ 
 Therefore, the result follows.
\end{proof}
Notice that the result above does not include the case of complete graphs of order two. For this case we propose the following conjecture.

\begin{conjecture}
For any integer $t\ge 2$  
$$\gamma_s(P_t\Box K_2)=\left\lceil\frac{3t+1}{4}\right\rceil.$$
Furthermore, for  $t\ge 3$,
$$\gamma_s(C_t\Box K_2)= \left\lbrace \begin{array}{ll}
\left\lceil\frac{3t}{4}\right\rceil +1, & \text{ if } t\equiv 4\text{ \rm (mod } 8) 
\\
\\
\left\lceil\frac{3t}{4}\right\rceil, & \text{ otherwise. } 
\end{array}\right.
$$
\end{conjecture}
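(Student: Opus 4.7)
The plan is to establish both equalities by separately proving upper and lower bounds, with the upper bounds coming from explicit constructions and the lower bounds from a column-by-column discharging argument. Throughout, label the vertices of $P_t\Box K_2$ and $C_t\Box K_2$ as pairs $(i,j)$ with $i\in\{1,\dots,t\}$ (indices taken modulo $t$ in the cyclic case) and $j\in\{1,2\}$, and for a candidate secure dominating set $D$ write $\alpha_i=|D\cap\{(i,1),(i,2)\}|\in\{0,1,2\}$ for the column load.

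For the upper bound on $\gamma_s(P_t\Box K_2)$, I would build a periodic zig-zag pattern with period~$4$: in each block of four consecutive columns, place guards at $(4k+1,1)$, $(4k+2,2)$, and $(4k+3,1)$, leaving column $4k+4$ empty. A direct check, analogous to the hand verification of small cases such as $t=4,5$, shows that this configuration is a secure dominating set; the empty column has both of its defenders available, and the guards can slide into its rows without leaving undefended vertices. The remainder $t\bmod 4$ is absorbed by an ad hoc adjustment in the last block that uses exactly one extra guard in the right residue classes, matching the value $\lceil(3t+1)/4\rceil$. For the cyclic case, the same period-$4$ block closes up cleanly when $t\equiv 0\pmod 8$; for the other residues modulo~$8$, small local modifications yield the desired upper bound of $\lceil 3t/4\rceil$, while the residue $t\equiv 4\pmod 8$ requires one extra guard because the only way to close the pattern compatibly with the secure-movement condition forces a collision of two ``half-used'' blocks.

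For the lower bound, I would carry out a case analysis on four consecutive column loads $(\alpha_i,\alpha_{i+1},\alpha_{i+2},\alpha_{i+3})$ to establish the inequality
\[
\alpha_i+\alpha_{i+1}+\alpha_{i+2}+\alpha_{i+3}\geq 3
\]
for every $i$, refining the arguments already used for $K_t\Box C_{t'}$ in the previous proposition. The starting observation is that a column with $\alpha_i=0$ forces $\alpha_{i-1}+\alpha_{i+1}\geq 2$ from the domination condition alone; the secure-movement condition strengthens this by ruling out placements where the two required defenders conflict (for example, when $\alpha_{i-1}=\alpha_{i+1}=1$ with the single guards in different rows, moving one of them to column $i$ leaves the other row of column $i$ undefended). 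Summing these local inequalities cyclically for $C_t\Box K_2$ gives $4|D|\geq 3t$, hence $|D|\geq\lceil 3t/4\rceil$; for the path, an additional one-sided contribution at the two endpoints (whose degree-$2$ vertices force at least one guard in the first two and last two columns) accounts for the $+1$ in $\lceil(3t+1)/4\rceil$.

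The main obstacle, as is typical for parameters of cycle-based product graphs, is the special residue $t\equiv 4\pmod 8$ in the cyclic case, where the lower bound must be raised from $\lceil 3t/4\rceil$ to $\lceil 3t/4\rceil+1$. I expect this to require a global parity invariant attached to any hypothetical extremal secure dominating set of size exactly $3t/4$: specifically, one should show that equality in the four-column inequality at every $i$ forces a rigid local pattern whose compatibility conditions, once wrapped around the cycle, depend on $t\bmod 8$ and become self-contradictory precisely when $t\equiv 4\pmod 8$. Isolating and formalizing this parity obstruction, and verifying that the complementary residues actually admit the claimed extremal constructions without hidden conflicts at the joining point, is the hardest step of the argument.
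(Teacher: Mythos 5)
This statement appears in the paper as a \emph{conjecture}: the authors give no proof, so there is nothing in the paper to compare your argument against, and anything you submit here must stand entirely on its own. As written, your proposal does not: it is a plan whose load-bearing steps are all deferred. First, the upper-bound construction is not internally consistent with the target value. Your period-$4$ pattern places $3$ guards per $4$ columns, i.e.\ $3t/4$ guards when $4\mid t$, but the conjectured value $\left\lceil\frac{3t+1}{4}\right\rceil$ equals $\frac{3t}{4}+1$ in that case, so the pure pattern cannot be a secure dominating set of $P_t\Box K_2$ for $t\equiv 0\pmod 4$; the ``ad hoc adjustment in the last block'' is therefore not absorbing a remainder but repairing a failure of the base pattern, and you never exhibit or verify the repaired configuration (nor any of the cyclic closures for the residues modulo $8$).

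Second, and more seriously, the entire lower bound rests on the unproven local inequality $\alpha_i+\alpha_{i+1}+\alpha_{i+2}+\alpha_{i+3}\ge 3$. This is far from a routine refinement of the $K_t\Box C_{t'}$ argument: windows such as $(0,0,2,0)$ or $(1,0,1,0)$ satisfy the domination condition and are only excluded (if at all) by delicate secure-movement arguments that reach \emph{outside} the window into neighbouring columns, so the inequality is not even a purely local statement about four columns. For example, the load sequence $2,0,0,2,0,2,0,0,2,0$ on $C_{10}\Box K_2$ is a dominating set containing the window $(0,0,2,0)$ and fails security only at column $5$, for reasons involving columns $3$ and $7$; your sketch gives no mechanism for handling such interactions. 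Third, even granting the inequality, the path case needs an extra unit precisely when $t\equiv 0\pmod 4$, and your ``one-sided endpoint contribution'' is not formulated in a way that combines with the disjoint-window count without double counting. Finally, you yourself flag the $t\equiv 4\pmod 8$ parity obstruction as the hardest step and supply no argument for it. Each of these is a genuine gap; together they leave the conjecture exactly where the paper leaves it.
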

 
 Regarding the conjecture above, we would emphasize that it is known from 
\cite{CartProdCycles} that $\gamma(P_t\Box K_2)=\left\lceil \frac{t+1}{2}\right\rceil$  and from  \cite{Cockayne2004} that $\gamma_R(P_t\Box K_2)=t+1$.
 
\begin{proposition}\label{PropositionCompleteCartesianStars}
Let $t\ge 2$ and   $ t'\ge 2$ be two integers. The following statements hold.
\begin{enumerate}[{\rm (i)}]
\item  $\gamma_r(K_t \Box K_{1,t'-1})=\min\{2t,t'\}$.

\item  $\gamma_s(K_t \Box K_{1,t'-1})=t'$.
\end{enumerate}
\end{proposition}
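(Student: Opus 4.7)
My plan is to set up notation, prove matching upper and lower bounds on both parameters, and isolate the main subcase that drives the lower bound.

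Let $V(K_t) = \{u_1, \ldots, u_t\}$ and $V(K_{1,t'-1}) = \{v_0, v_1, \ldots, v_{t'-1}\}$ with $v_0$ the star center, and write $A_j = V(K_t) \times \{v_j\}$. Each $A_j$ induces a copy of $K_t$, and the only edges between different $A_j$'s are the matching edges $(u_i, v_0) \sim (u_i, v_j)$ for $j \geq 1$. The whole argument rests on this ``central column $A_0$ joined to each leaf column $A_j$ by a single matching'' picture.

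For the upper bounds, I would verify that $S = \{u_1\} \times V(K_{1,t'-1})$ is a secure dominating set of size $t'$: an attack on $(u_i, v_0)$ with $i \neq 1$ is repelled by moving the guard at $(u_1, v_0)$, and an attack on $(u_i, v_j)$ with $i \neq 1$ and $j \geq 1$ by moving the guard at $(u_1, v_j)$; in both cases the remaining guards in $S$ keep everything else dominated. This gives both $\gamma_s \leq t'$ and $\gamma_r \leq t'$. For part (i), I would additionally use the WRDF with $f \equiv 2$ on $A_0$ and $f \equiv 0$ elsewhere: any attack on $(u_i, v_j)$ with $j \geq 1$ is handled by moving one unit from $(u_i, v_0)$, which retains value $1$ and still dominates the row $\{u_i\} \times V(K_{1,t'-1})$. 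This second WRDF has weight $2t$, so $\gamma_r \leq \min\{2t, t'\}$.

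For the lower bound in (i), let $f(V_0, V_1, V_2)$ be an arbitrary WRDF and set $a_j = |A_j \cap (V_1 \cup V_2)|$, $w_j = f(A_j)$. I plan to split on $k = |\{j \geq 1 : a_j = 0\}|$, the number of ``empty'' leaf columns. If $k \geq 2$, every vertex of $A_0$ must lie in $V_1 \cup V_2$ to dominate each empty column, and for two empty columns $j_1, j_2$ the only defender of an attack at $(u_i, v_{j_1})$ is $(u_i, v_0)$; the post-move function leaves $(u_i, v_{j_2})$ undefended unless $f(u_i, v_0) \geq 2$, forcing $w_0 \geq 2t$. If $k = 1$, then $w_0 \geq a_0 = t$ and the remaining $t' - 2$ leaf columns each carry weight $\geq 1$, so $w(f) \geq t + (t' - 2)$, which exceeds $\min\{2t, t'\}$ thanks to $t \geq 2$. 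If $k = 0$, then $\sum_{j \geq 1} w_j \geq t' - 1$; I would rule out $w_0 = 0$ by observing that it forces every leaf column to carry a single weight-$1$ guard $(u_{i_j}, v_j)$ with the map $j \mapsto i_j$ surjective onto $\{1, \ldots, t\}$, after which any attack at $(u_i, v_0)$ can only move some $(u_i, v_{j_0})$, leaving $(u_{i'}, v_{j_0})$ for $i' \neq i$ without any positive neighbor. Hence $w_0 \geq 1$ and $w(f) \geq t'$. The three cases combine to $w(f) \geq \min\{2t, t'\}$. For (ii), the same column-by-column analysis applied to a secure dominating set $S$ (with $s_j = |S \cap A_j|$ and the secure-domination rule in place of the WRDF rule) yields $|S| \geq t'$: the multi-empty-column subcase fails because the unique defender $(u_1, v_0)$ of an attack at an empty-column vertex leaves another empty-column vertex undominated, and the $k = 0$ subcase with $|S| = t' - 1$ collapses to $s_0 = 0$ with one guard per leaf column, broken by any attack at a vertex of $A_0$.

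The hard part will be subcase $k = 1$: the bound $w(f) \geq t + (t' - 2)$ must be compared separately with $2t$ and with $t'$, and the comparison with $t'$ uses the hypothesis $t \geq 2$ in its sharpest form. The remaining step-by-step verification of which moves leave which vertices undefended is routine given how cleanly the matchings connect $A_0$ to each leaf column.
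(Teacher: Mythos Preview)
Your proof is correct and follows essentially the same strategy as the paper's: both arguments exploit the column decomposition $A_j = V(K_t)\times\{v_j\}$, use the matching between $A_0$ and each leaf column, and split the lower bound according to how many leaf columns carry no guards. The only cosmetic differences are that the paper invokes Theorem~\ref{MainUpperBoundWRD-Cartesian} for the upper bounds whereas you exhibit the WRDF and the secure dominating set explicitly, and the paper frames the lower bound as a single proof by contradiction (assuming $w(f)<\min\{2t,t'\}$ at the outset) rather than treating the three cases $k\ge 2$, $k=1$, $k=0$ separately.
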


\begin{proof}
 From Theorem \ref{MainUpperBoundWRD-Cartesian} we have that $\gamma_r(K_t \Box K_{1,t'-1})\le \min\{2t,t'\}.$  We proceed to show that  $\gamma_r(K_t \Box K_{1,t'-1})\ge \min\{2t,t'\}$.
 Let $f(W_0,W_1,W_2)$ be a $\gamma_r(K_t \Box K_{1,t'-1})$-function and let $y_0$ be the universal vertex of $K_{1,t'-1}$.   
  Suppose that $\gamma_r(K_t \Box K_{1,t'-1})< \min\{2t,t'\}$. Now, since $\gamma_r(K_t \Box K_{1,t'-1})<2t$, there exists $x\in V(K_{t})$ such that $f(\{x\}\times V(K_{1,t'-1}))\le 1$ and, since $\gamma_r(K_t \Box K_{1,t'-1})<t'$,  there exist $y\in V(K_{1,t'-1})$  such that    $ V(K_{t})\times\{y\}\subseteq W_0$. 
If $y=y_0$, then there is exactly one guard for each copy of $K_t$ different from the one associated to $y_0$ (as every vertex has to be defended), which implies that the movement of any guard to a vertex in $V(K_t)\times \{y_0\}$ produces undefended vertices, so that $y\ne y_0$.
Notice that $f( V(K_{t})\times\{y_0\})\ge t$, otherwise there are undefended vertices in $ V(K_{t})\times\{y\}$. 
 Now, suppose that $V(K_{t})\times\{y'\} \subseteq W_0$, for some $y'\in V(K_{1,t'-1})\setminus \{y_0,y\}$. In such a case,  $(x,y')$ and 
 $(x,y)$ are only defended by a guard located at $(x,y_0)$, but $(x,y)$ will become undefended after the movement of that guard to $(x,y')$, which is a contradiction.  Hence, $\sum_{v\ne y_0}f( V(K_{t})\times\{v\})\ge t'-2$, and so 
 $w(f)\ge t+t'-2\ge t'$, which is a contradiction again. Thus, $\gamma_r(K_t \Box K_{1,t'-1})\ge \min\{2t,t'\}$, as required. 
 Therefore, (i)  follows. 
 
  We now proceed to prove (ii). As above, let $y_0$ be the universal vertex of $K_{1,t'-1}$,  $W$  a $\gamma_s(K_t \Box K_{1,t'-1})$-set and $u\in V(K_t)$. Suppose that $|W|\le t'-1$. In such a case, there exists $v\in V(K_{1,t'-1})$ such that $W\cap (V(K_t)\times \{v\})=\emptyset$. Notice that $N(u,v)\cap W\ne \emptyset$. We differentiate two cases. 
  \\
  \noindent Case 1:  $v\ne y_0$. Since $W$ is a dominating set,  $V(K_t)\times \{y_0\}\subseteq W$. Thus, there exists $v_1\in V(K_{1,t'-1})\setminus\{v,y_0\}$ such that $V(K_t)\times \{v_1\}\subseteq \overline{W}$. Hence,   $N[(u,v)]\cap W=\{(u,y_0)\}=N[(u,v_1)]\cap W$, and so $(W\setminus \{(u,y_0)\})\cup \{(u,v_1)\}$ is not a dominating set, which is a contradiction. 
    \\
  \noindent Case 2:  $v=y_0$. Since $W$ is a dominating set and $|W|<t'$, for every $v'\in V(K_{1,t'-1})\setminus\{y_0\}$ we have that 
 $|(V(K_t)\times \{v'\})\cap W|=1$. 
 Hence,  for  $u\in V(K_t)$ such that $(u,v')\in W$ and $u'\in V(K_t)\setminus \{u\}$ we have that   $N[(u',v')]\cap W=\{(u,v')\}$. Thus, for every $v'\in V(K_{1,t'-1})\setminus\{y_0\}$ and $u\in V(K_t)$ such that $(u,v')\in W$, we have that $(W\setminus \{(u,v')\})\cup \{(u,y_0)\}$ is not a dominating set, which is a contradiction.
 \\
  According to the two cases above we can conclude that $\gamma_s(K_t \Box K_{1,t'-1})=|W|\ge t'$. Finally,  Theorem 
 \ref{MainUpperBoundWRD-Cartesian} leads to $\gamma_s(K_t \Box K_{1,t'-1})=t'$. \end{proof}

\begin{proposition}
For any   graph $G$ and any integer $t>2n(G)\ge 4$,
$$\gamma_r(G \Box K_{1,t-1})=2n(G).$$
\end{proposition}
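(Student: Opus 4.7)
For the upper bound $\gamma_r(G \Box K_{1,t-1}) \le 2n(G)$, I would let $y_0$ denote the central vertex of $K_{1,t-1}$ and take the function $f$ that assigns value $2$ to every vertex $(x, y_0)$ with $x \in V(G)$ and value $0$ elsewhere. The verification is short: every leaf vertex $(x, y)$ is dominated by $(x, y_0)$, and moving one guard from $(x, y_0)$ to any $(x, y)$ leaves a guard at $(x, y_0)$ that still dominates the remaining leaves of that copy of $K_{1,t-1}$; so no vertex becomes undefended and $f$ is a WRDF of weight $2n(G)$.

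For the lower bound, my plan is to argue by contradiction: suppose $f$ is a $\gamma_r$-function with $w(f) < 2n(G)$. For each $y \in V(K_{1,t-1})$ set $F(y) = \sum_{x \in V(G)} f(x, y)$, let $L$ denote the set of leaves of $K_{1,t-1}$, and let $L_0 = \{y \in L : F(y) = 0\}$. If $L_0 = \emptyset$, then each of the $t-1$ leaf rows contributes at least $1$ to the weight, giving $w(f) \ge t - 1 \ge 2n(G)$ (using $t > 2n(G)$), a contradiction. Otherwise fix $y_1 \in L_0$; for every $x \in V(G)$ the only candidate defender of $(x, y_1)$ is $(x, y_0)$, because the other neighbors $(x', y_1)$ with $x' \sim_G x$ all lie in the empty row $y_1$. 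This forces $f(x, y_0) \ge 1$ for all $x$, i.e.\ $F(y_0) \ge n(G)$. In the sub-case $|L_0| = 1$, the remaining $t-2$ leaf rows contribute at least $t-2$, so $w(f) \ge n(G) + (t-2) \ge 3n(G) - 1 \ge 2n(G)$, again a contradiction.

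The delicate sub-case is $|L_0| \ge 2$, which I would handle by picking a second leaf $y_2 \in L_0$ and invoking the WRDF movement condition at $(x, y_1)$. Since $(x, y_0)$ is the unique positively-valued neighbor of $(x, y_1)$, the allowed move is from $(x, y_0)$ to $(x, y_1)$, producing $f'(x, y_0) = f(x, y_0) - 1$ and $f'(x, y_1) = 1$. For $(x, y_2)$, whose $G$-neighbors still all have value $0$, to remain defended one then needs $f(x, y_0) \ge 2$; since $x$ is arbitrary, $F(y_0) \ge 2n(G)$, contradicting $w(f) < 2n(G)$. This last step is the main conceptual point of the proof: a second empty leaf row lifts the guard requirement at the central copy of $G$ from one-per-vertex (needed for domination) to two-per-vertex (needed to survive the WRDF movement), which is exactly the phenomenon matching the upper bound $2n(G)$.
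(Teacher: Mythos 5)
Your proof is correct, but the lower bound follows a genuinely different route from the paper's. The paper disposes of the whole proposition in two lines: the upper bound comes from Theorem~\ref{MainUpperBoundWRD-Cartesian} via $\gamma_r(G\Box K_{1,t-1})\le n(G)\gamma_r(K_{1,t-1})=2n(G)$ (your explicit function with two guards on each $(x,y_0)$ is exactly the function that proof would construct), and the lower bound is obtained by monotonicity under edge addition (Proposition~\ref{RemovingEdges}), reducing to the already-established value $\gamma_r(K_{n(G)}\Box K_{1,t-1})=\min\{2n(G),t\}=2n(G)$ from Proposition~\ref{PropositionCompleteCartesianStars}. You instead give a self-contained counting argument on the row sums $F(y)$: no empty leaf row forces weight at least $t-1\ge 2n(G)$; one empty leaf row forces $F(y_0)\ge n(G)$ by domination plus $t-2$ from the remaining leaf rows; and two empty leaf rows force $f(x,y_0)\ge 2$ for every $x$ because the only legal guard move into the first empty row would otherwise leave the second one undefended. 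All three cases check out (the key observation that $(x,y_0)$ is the unique positively-valued neighbour of $(x,y_1)$ is right, since the $G$-neighbours of $(x,y_1)$ all sit in the empty row). What the paper's approach buys is brevity and reuse of earlier machinery; what yours buys is independence from Proposition~\ref{PropositionCompleteCartesianStars} and a transparent explanation of why the answer is $2n(G)$ rather than $n(G)$ --- namely your last observation that a second empty leaf row upgrades the central requirement from one guard per vertex to two.
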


\begin{proof}
By Theorem 
 \ref{MainUpperBoundWRD-Cartesian} we have $\gamma_r(G \Box K_{1,t-1})\le 2n(G).$ To conclude the proof we only need to observe that Propositions \ref{RemovingEdges} and  \ref{PropositionCompleteCartesianStars}  lead to $\gamma_r(G \Box K_{1,t-1})\ge \gamma_r(K_{n(G)} \Box K_{1,t-1})= 2n(G).$ 
\end{proof}

\begin{theorem}\label{SecuredominationCartesian}
 If no component of a graph $H$ is a complete graph, then for any nontrivial graph $G$,
$$  \gamma_s(G\Box H)\le n(G)\gamma(H)+n(H)\gamma(G)- 2\gamma(G)\gamma(H)-\gamma(G)\tau(H).$$ 
\end{theorem}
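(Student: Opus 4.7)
The plan is to construct an explicit secure dominating set $W$ of $G\Box H$ whose cardinality matches the claimed bound. Let $D_G$ be a $\gamma(G)$-set and let $D_H$ be a $\gamma(H)$-set with $|T(D_H)|=\tau(H)$, and set $S'=V(H)\setminus(D_H\cup T(D_H))$. Since no component of $H$ is complete, the argument in the proof of Theorem \ref{SecuredominationVSdominationTau} shows that $S'$ is in fact a secure dominating set of $H$. I would then take
\[
W=\bigl(D_G\times S'\bigr)\cup\bigl((V(G)\setminus D_G)\times D_H\bigr),
\]
whose size $\gamma(G)(n(H)-\gamma(H)-\tau(H))+(n(G)-\gamma(G))\gamma(H)$ rearranges to the right-hand side of the inequality. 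Dominance of $W$ follows by a direct case analysis on where $g$ and $h$ sit: vertices with $g\in D_G$ are fiber-dominated using that $S'$ dominates $H$ (Lemma \ref{securecomplementlesstau}), and vertices with $g\notin D_G$ are fiber-dominated using that $D_H$ dominates $H$, invoking a twin of $h$ in $D_H$ when $h\in T(D_H)$.

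For the secure property, for each $(g,h)\notin W$ I must exhibit a defender $u\in W\cap N((g,h))$ such that $W'=(W\setminus\{u\})\cup\{(g,h)\}$ still dominates $G\Box H$. Since only the fiber $\{g\}\times V(H)$ changes, it suffices to check domination of that single fiber after the move. When $g\in D_G$ and $h\in D_H\cup T(D_H)$, the secure-domination property of $S'$ in $\{g\}\times V(H)\cong H$ supplies an $h'\in S'$ with $h'\sim h$ such that $(S'\setminus\{h'\})\cup\{h\}$ still dominates $H$; picking $u=(g,h')$ settles this case.

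The substantive case is $g\notin D_G$, where the fiber guards $\{g\}\times D_H$ are only dominating, not secure. Here I would choose $u=(g,h')$ with $h'\in D_H\cap N(h)$, taking $h'$ to be a true twin of $h$ in $D_H$ if $h\in T(D_H)$. The new guard at $(g,h)$ then defends the emptied $(g,h')$ since $h\sim h'$. The only other vertices that can be in danger are those $(g,h^{**})$, with $h^{**}\in V(H)\setminus D_H$, whose unique original $D_H$-neighbour in the fiber was $h'$, and three subcases cover them. If $h^{**}\in S'$, a row defender $(g'',h^{**})\in D_G\times S'$ exists by taking any $g''\in D_G\cap N(g)$ (available because $D_G$ dominates $G$). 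If $h^{**}\in T(D_H)$ has a true twin in $D_H\setminus\{h'\}$, that twin still hosts a guard and defends $(g,h^{**})$. Finally, if the true twin of $h^{**}$ in $D_H$ is exactly $h'$, then $N[h^{**}]=N[h']$ combined with $h\sim h'$ forces $h\sim h^{**}$, so the new guard $(g,h)$ is itself the defender. The main obstacle is this third subcase, where the secure-dominating structure of $H$ gives nothing directly and one must exploit the precise identity $N[h^{**}]=N[h']$ together with the adjacency $h\sim h'$ built into the choice of defender.
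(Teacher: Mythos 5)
Your proposal is correct and follows essentially the same route as the paper: you build the identical set $W=(D_G\times S')\cup((V(G)\setminus D_G)\times D_H)$, invoke Lemma \ref{securecomplementlesstau} and the secure-domination property of $S'$ from the proof of Theorem \ref{SecuredominationVSdominationTau} for the $D_G$-rows, and handle the remaining rows with the same true-twin argument (your observation that each $H$-fiber is self-dominating legitimately reduces the check to a single fiber). The only difference is organizational: the paper splits $\overline{W}$ into three cases by the $H$-coordinate, while you split by the $G$-coordinate and fold the $T(D_H)$ case into the choice of defender.
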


\begin{proof}
In this proof we use the set $T(S)$  as defined prior to Lemma  \ref{securecomplementlesstau}. Let $S_1$  be a $\gamma(G)$-set and $S_2$ a  $\gamma(H)$-set such that $|T(S_2)|=\tau(H)$. We will show that   $W=(S_1\times S'_2) \cup (\overline{S}_1\times  S_2)$ is a secure dominating set of $G\Box H$, where $S_2'=V(H)\setminus (S_2\cup T(S_2))$. First of all, notice that $W$ is a dominating set of $G\Box H$ as $S_2$ and $S'_2$ are dominating sets in $H$ (by Lemma \ref{securecomplementlesstau}). We differentiate the following three cases for $(x,y)\in \overline{W}$.

\noindent Case 1: $(x,y)\in S_1\times \overline{S'_2}$. In the proof of Theorem  \ref{SecuredominationVSdominationTau} we have shown that $S'_2$ is a secure dominating set. Hence, for each vertex $(x,y)\in S_1\times \overline{S'_2}$ there exists $(x,y')\in S_1\times S'_2$ such that the movement of a guard from $(x,y')$ to $(x,y)$ does not produce undefended vertices in $\{x\}\times\overline{S'_2}$. Such a movement of guards does not produce undefended vertices in $\overline{S_1}\times \{y'\}$, as these vertices are dominated by the ones in $\overline{S_1}\times S_2$.

\noindent Case 2:  $(x,y)\in \overline{S_1}\times S'_2$. For any $y'\in S_2\cap N(y)$ the movement of a guard from $(x,y')$ to $(x,y)$ does not produce undefended vertices in $S_1\times \{y'\}$, as these vertices are dominated by the ones in $S_1\times \{y\}$. Such a movement of guards does not produce undefended vertices in $\{x\}\times S'_2$, as these vertices are dominated by the ones in $\{x'\}\times S'_2$, for every $x'\in S_1\cap N(x)$. Now, suppose that  $y''\in N(y')\cap T(S_2)$.  If $|N(y'')\cap S_2|\ge 2$, then $(x,y'')$ remains defended after the above mentioned movement of guards. If $|N(y'')\cap S_2|=\{y'\}$, then $y'$ and $y''$ are twins, which implies $(x,y'')\in N(x,y)$, so that $(x,y'')$ remains defended after the movement of a guard form $(x,y')$ to $(x,y)$.

\noindent Case 3: $(x,y)\in \overline{S_1}\times T(S_2)$. Let $y'\in S_2$ such that $N[y]=N[y']$. As in the previous case, the movement of a guard from $(x,y')$ to $(x,y)$ does not produce undefended vertices in $S_1\times \{y'\}$. On the other hand, since $y$ and $y'$ are twins, the movement of a guard from $(x,y')$ to $(x,y)$ does not produce undefended vertices in $\{x\}\times\overline{S_2}$.

According to the three cases above, $W$ is a secure dominating set of $G\Box H$. Therefore, 
$$\gamma_s(G\Box H)\le |W|=n(G)\gamma(H)+n(H)\gamma(G)- 2\gamma(G)\gamma(H)-\gamma (G)\tau(H)$$
as desired.
\end{proof}

According to the result above, for any noncomplete graph $H$,
$$\gamma_s(K_t\Box H)\le (t-2)\gamma(H)+n(H)-\tau(H).$$
It is not difficult to check that the bound above is tight. For instance, it is achieved  by $H\cong K_l+N_3$ for $l\ge 2$, as $\gamma_s(K_3\Box (K_l+N_3))=5$, $\gamma(H)=1$ and $\tau(H)=l-1.$ Notice that, in this case, Theorem \ref{SecuredominationCartesian} gives a better result than
Theorem \ref{MainUpperBoundWRD-Cartesian}.

We learned from Theorem \ref{UpperBoundnOver2} that  $\gamma_s(G)\le \left\lfloor \frac{n(G)}{2}\right\rfloor $ for every graph $G\not\cong C_5$ having minimum degree $\delta(G)\ge 2$. 
If $G$ and $H$ have no isolated vertices, then $\gamma(G)\in \{1,\dots , \left\lfloor n(G)/2\right\rfloor \}$ and $\gamma(H)\in \{1,\dots , \left\lfloor n(H)/2\right\rfloor \}$. Hence, we can state the following remark which shows that the bound provide by  Theorem \ref{SecuredominationCartesian} is never worse that the bound $\gamma_s(G\Box H)\le  \left\lfloor\frac{n(G)n(H)}{2}\right\rfloor$ deduced from Theorem \ref{UpperBoundnOver2}.

\begin{remark}\label{MaximumUpperBoundSecuredominationCartesian}
 If $G$ and $H$ have no isolated vertices, then 
$$n(G)\gamma(H)+n(H)\gamma(G)- 2\gamma(G)\gamma(H)\le \left\lfloor\frac{n(G)n(H)}{2}\right\rfloor.$$ 
\end{remark}

The inequality chain $$\gamma_r(G\Box H) \le \gamma_s(G\Box H)\le n(G)\gamma(H)+n(H)\gamma(G)- 2\gamma(G)\gamma(H)$$  is tight. It is achieved for $P_3\Box P_3 $ and  $K_2\Box K_2\cong C_4$, as  $\gamma_r(P_3\Box P_3)=4$ and $\gamma_r(C_4)=2$.
Proposition \ref{CartesianStars} provides another example of graphs for which this inequality chain is achieved. 

%%%%%%%%%%
%%%%%%%%%%

\begin{proposition}\label{CartesianStars}
For any integer $t\ge 3$,
$$\gamma_r(K_{1,t-1}\Box K_{1,t-1})=\gamma_s(K_{1,t-1}\Box K_{1,t-1})=2(t-1).$$
\end{proposition}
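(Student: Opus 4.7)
The plan is as follows. For the upper bound, I will apply Theorem~\ref{SecuredominationCartesian} with $G = H = K_{1,t-1}$: for $t \ge 3$ the star is not complete, $\gamma(K_{1,t-1}) = 1$, and its unique $\gamma$-set $\{c\}$ (the center) satisfies $T(\{c\}) = \emptyset$ since $N[\ell] \neq N[c]$ for every leaf $\ell$; thus $\tau(K_{1,t-1}) = 0$. The theorem then yields $\gamma_s(K_{1,t-1}\Box K_{1,t-1}) \le t + t - 2 = 2(t-1)$, and Proposition~\ref{DominationChain1}(ii) transfers this bound to $\gamma_r$.

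For the matching lower bound, set $m := t - 1$ and label the vertices of $K_{1,t-1}\Box K_{1,t-1}$ by $z = (c,c)$, $x_i = (\ell_i, c)$, $y_j = (c, \ell_j)$, and $w_{ij} = (\ell_i, \ell_j)$ for $i,j \in \{1,\dots,m\}$, where $c$ and $\ell_1,\dots,\ell_m$ denote the center and leaves of each star factor. The structural key is that $N(w_{ij}) = \{x_i, y_j\}$, so the $m^2$ ``corner'' vertices have degree exactly two. Given any WRDF $f$, let $A = \{i : f(x_i) \ge 1\}$, $B = \{j : f(y_j) \ge 1\}$, $P = \{(i,j) : f(w_{ij}) \ge 1\}$, $P_i = \{j : (i,j) \in P\}$, with $a = |A|$ and $b = |B|$. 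Each $w_{ij}$ with $i \notin A$ and $j \notin B$ must itself be guarded, so $|P| \ge (m-a)(m-b)$, which gives the baseline
\[
w(f) \;\ge\; a + b + (m-a)(m-b) + f(z).
\]
A direct calculation shows the right-hand side is already at least $2m$ when $a, b \le m - 2$ (minimum $2m$ attained at $a = b = m - 2$) or when $a = b = m$, disposing of those configurations.

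The remaining regime is $\max\{a,b\} \in \{m-1,m\}$ with the other coordinate at most $m-1$, where the baseline stalls at $2m - 1$ (or lower), and the weak Roman condition must supply the missing weight. I will treat the principal case $a = m$ (the case $b = m$ being symmetric). For each $w_{ij}$ with $f(w_{ij}) = 0$ and $j \notin B$, the only possible defender is $x_i$; if $f(x_i) = 1$, the move $x_i \to w_{ij}$ would orphan every sibling $w_{ik}$ with $k \neq j$, $k \notin B$, and $(i,k) \notin P$. Hence for each index $i$ either $f(x_i) \ge 2$, or $P_i$ covers all but at most one element of $[m]\setminus B$. Writing $a_1 := |\{i : f(x_i) \ge 2\}|$ and aggregating these per-row alternatives over $i$, together with the domination requirement on $y_j$ for $j \notin B$ (which forces $f(z)\ge 1$ or $|\{i : (i,j)\in P\}|\ge 1$), yields $w(f) \ge 2m$ in every sub-case. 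The borderline $a = m - 1$ with $b \le m - 1$ (and its mirror) is then handled by additionally invoking the weak Roman defense of $z$: when $f(z) = 0$, a safe move from some $x_i$ with $f(x_i) = 1$ to $z$ forces $P_i \supseteq [m]\setminus B$, and since this row of $P$ is disjoint (in $[m]\times[m]$) from the domination-mandated row $\{i^*\}\times([m]\setminus B)$ of the unique $i^* \notin A$, it contributes the extra unit to $|P|$; otherwise some axis vertex must carry value $\ge 2$, providing the unit directly.

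The main obstacle is the bookkeeping in the case $a = m$: the per-row weak Roman constraints extracted from the $m^2$ corner vertices and the column-wise domination constraints for the $y_j$'s with $j \notin B$ interact subtly, and one must split carefully on $a_1$ and on whether $f(z) \ge 1$ to ensure that the extra units extracted are honestly new rather than being double-counted against terms already present in the baseline $a + b + (m-a)(m-b)$.
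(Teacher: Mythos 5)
Your upper bound is exactly the paper's: both apply Theorem \ref{SecuredominationCartesian} with $\gamma(K_{1,t-1})=1$ and $\tau(K_{1,t-1})=0$ to obtain $\gamma_s(K_{1,t-1}\Box K_{1,t-1})\le 2(t-1)$, and then pass to $\gamma_r$ via Proposition \ref{DominationChain1}. For the lower bound you take a genuinely different route. The paper argues by contradiction from $w(f)\le 2t-3$: it first shows every leaf-copy $\{u\}\times V$ carries at least one guard, extracts a copy $u^*$ carrying exactly one guard and a column $v^*$ with $f(x,v^*)=0$, and then kills the two cases $f(u^*,x)\in\{0,1\}$ by exhibiting unsafe moves onto $(x,x)$. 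You instead run a direct counting argument driven by the $m^2$ degree-two corner vertices $w_{ij}$ (with $m=t-1$): domination alone forces $|P|\ge (m-a)(m-b)$, which already yields $w(f)\ge 2m$ outside the thin boundary regime $\max\{a,b\}\ge m-1$, and there the weak Roman safety condition for a move $x_i\to w_{ij}$ (respectively for the defence of $z$) forces each singly-guarded row to cover all but one corner of $[m]\setminus B$. I have checked that the aggregation $w(f)\ge m+a_1+b+(m-a_1)(q-1)+f(z)$ with $q=m-b$ closes every sub-case of $a=m$ without double-counting (row contributions to $|P|$ lie in disjoint rows, and the column contribution for $y_{j_0}$ with $j_0\notin B$ is only invoked when $q=1$, where the row bound is vacuous), and that the borderline case $a=m-1$, $f(z)=0$ does produce the extra unit of $|P|$. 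One small point to patch there: the defender of $z$ may be a $y_{j_0}$ with $j_0\in B$ rather than an $x_i$; the same disjointness argument applies, since a safe move $y_{j_0}\to z$ forces $(i^*,j_0)\in P$, a position outside the domination-mandated set $\{i^*\}\times([m]\setminus B)$. Overall your proof is heavier in case analysis but more mechanical and quantitative, while the paper's is shorter and more ad hoc, exploiting the symmetry between the two star factors.
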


\begin{proof}
According to Theorem \ref{SecuredominationCartesian}, we only need to prove the lower bound $\gamma_r(K_{1,t-1}\Box K_{1,t-1})\ge 2(t-1)$. Let $f(W_0,W_1,W_2)$ be a $\gamma_r(K_{1,t-1}\Box K_{1,t-1})$-function and, for simplicity, set $V=V(K_{1,t-1}) 
$. Let $x\in V$ be the vertex of  degree $t-1$. From now on, we suppose that $w(f)\le 2t-3$.  We proceed to show the following claim. 

\noindent
\textbf{Claim 1}.  $f(\{u\}\times V)\ge 1$, for every $u\in V\setminus \{x\}$.

In order to prove Claim 1, we suppose that  there exists $u\in V\setminus \{x\}$ such that $f(\{u\}\times V)=0$. In such a case, $f(x,y)\ge 1$, for every $y \in V$. Now, since  $w(f)\le 2t-3$, there exist $u'\in V\setminus \{x,u\}$ and $v\in V$ such that  $f(\{u'\}\times V)=0$ and $f(x,v)=1$, which is a contradiction as $(u',v)$ is undefended after the movement of the guard located in $(x,v)$ to $(u,v)$. Thus, Claim 1  follows. 

Since   $w(f)\le 2t-3$ ,   Claim 1 leads to the following ones.

\noindent
\textbf{Claim 2}.  There exists $u^*\in V\setminus \{x\}$ such that $f(\{u^*\}\times V)= 1$.   

\noindent
\textbf{Claim 3}.  There exists $v^*\in V\setminus \{x\}$ such that $f(x,v^*)= 0$.   

We differentiate the following two cases for $f(u^*,x)$.

\noindent Case 1: $f(u^*,x)=0$. By Claims 2 and 3 we can conclude that  $f(u^*,v^*)=1$, otherwise $(u^*,v^*)$  is not dominated by the elements in $W_1\cap W_2$. Since every vertex in $\{u^*\}\times V\setminus \{(u^*,x),(u^*,v^*)\}$ has to be dominated by some vertex in $W_1\cup W_2$, from   $w(f)\le 2t-3$ and Claim 1 we deduce that $f(x,v)=1$  for every $v\in V\setminus \{x,v^*\}$,   $f(\{u\}\times V)=1$  for every $u\in V\setminus \{x,u^*\}$, and $f(x,x)=0$. Hence, the movement of any guard from a vertex in $\{x\}\times V$ to $(x,x)$ produces undefended vertices in $\{u^*\}\times V$, and the movement of a guard from a vertex of the form $(a,x)$ to $(x,x)$ leaves vertex $(a,v^*)$ undefended.  In both cases we have a contradiction. 

\vspace{0,2cm}
\noindent  Case 2:  $f(u^*,x)=1$. In this case,  $(u^*,x)$ is the only vertex in $W_1\cup W_2$ which is adjacent to $(u^*,v^*)$. Hence, 
 the movement of a guard from $(u^*,x)$ to $(u^*,v^*)$ does not produce undefended vertices, and so from $w(f)\le 2t-3$ and Claim 1 we deduce that $f(x,v)=1$ for every $v\in V\setminus \{x,v^* \}$, $f(\{u\}\times V)=1$  for every $u\in V\setminus \{x,u^*\}$, and $f(x,x)=0$. Thus, the movement of a guard from a vertex of the form $(a,x)$ to $(x,x)$ leaves vertex $(a, v^*)$ undefended, which is a contradiction.

According to the two cases above we can conclude that, $w(f) \ge 2(t-1)$, as required.
\end{proof}

%%%%%%%%%%%%

As usual in domination theory, when studying a domination parameter, we can ask if a Vizing-like conjecture can be proved or formulated. By Proposition~\ref{CartesianStars}  we can claim that there are graphs with $$\gamma_s(G\Box H)\not\ge \gamma_s(G)\gamma_s( H),$$ \textit{i.e.,}  for any $p\ge 3$ we have $\gamma_s(K_{1,p} \Box K_{1,p})=2p <p^2=\gamma_s(K_{1,p})\gamma_s(K_{1,p}).$ %Therefore, we can not formulate a Vizing-like result for the secure domination number. 

\begin{theorem}\label{AAAA}
Let $f_H=(V_0,V_1,V_2)$ be a $\gamma_r(H)$-function of a graph $H$ such that $V_2\ne \emptyset$, and let $Y= V(H)\setminus N[V_2]$. For any graph $G$, 
$$\gamma_r(G\Box H) \le 2n(G)|V_2| + |Y|\gamma_r(G).$$
\end{theorem}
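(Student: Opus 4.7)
The plan is to exhibit an explicit WRDF $h$ on $G\Box H$ of weight $2n(G)|V_2|+|Y|\gamma_r(G)$. Let $g$ be a $\gamma_r(G)$-function and define $h:V(G\Box H)\to\{0,1,2\}$ by
\[
h(u,v)=\begin{cases} 2 & \text{if } v\in V_2,\\ g(u) & \text{if } v\in Y,\\ 0 & \text{otherwise.}\end{cases}
\]
The weight calculation is immediate: the copies $V(G)\times\{v\}$ with $v\in V_2$ contribute $2n(G)|V_2|$ and the copies with $v\in Y$ contribute $|Y|\gamma_r(G)$, while the copies indexed by $N(V_2)\setminus V_2$ contribute $0$.

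What remains is to verify that $h$ is a WRDF. The vertices with $h(u,v)=0$ split into two types: (a) $v\in N(V_2)\setminus V_2$, and (b) $v\in Y$ with $g(u)=0$. For type~(a) I would pick $v^\star\in V_2\cap N(v)$ and move a guard from $(u,v^\star)$ to $(u,v)$; this is safe because $h(u,v^\star)$ only decreases from $2$ to $1$, so every vertex previously dominated by $(u,v^\star)$ is still dominated. For type~(b) I would invoke the WRDF property of $g$ to select $u^\star\in N(u)$ with $g(u^\star)\ge 1$ such that the modified function $g'$ (given by $g'(u)=1$, $g'(u^\star)=g(u^\star)-1$, unchanged elsewhere) has no undefended vertex in $G$, and then move a guard from $(u^\star,v)$ to $(u,v)$. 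Inside the copy $V(G)\times\{v\}$ the resulting function restricts to $g'$, which is a DF on $G$; in particular $(u^\star,v)$ itself (if it becomes $0$, i.e.\ when $g(u^\star)=1$) is defended by $(u,v)$.

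The main obstacle is ruling out cross-copy failures in case~(b): a vertex $(a,b)$ with $b\ne v$ could in principle lose its unique defender $(u^\star,v)$ precisely when $g(u^\star)=1$. Any such $(a,b)$ must satisfy $a=u^\star$ and $b\in N(v)\setminus\{v\}$, and the issue only arises when $h(u^\star,b)=0$. I would then argue by cases on $b$: if $b\in V_2$ then $h(u^\star,b)=2$, a contradiction; if $b\in Y$ then $h(u^\star,b)=g(u^\star)\ge 1$, again a contradiction; hence necessarily $b\in N(V_2)\setminus V_2$, and then there exists $b''\in V_2\cap N(b)$ with $h(u^\star,b'')=2$, a value unaffected by the move, so that $(u^\star,b)$ remains defended. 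This closes case~(b), establishes that $h$ is a WRDF, and yields $\gamma_r(G\Box H)\le w(h)=2n(G)|V_2|+|Y|\gamma_r(G)$.
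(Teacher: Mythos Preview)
Your proof is correct and follows essentially the same approach as the paper: you construct the identical WRDF (value $2$ on $V(G)\times V_2$, a copy of a $\gamma_r(G)$-function on each $V(G)\times\{v\}$ with $v\in Y$, and $0$ elsewhere) and verify it via the same two cases. In fact your treatment of case~(b) is more careful than the paper's, which only asserts that no undefended vertices arise in $V(G)\times Y$ without explicitly addressing the cross-copy neighbours $(u^\star,b)$; your case analysis on $b$ fills this in cleanly.
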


\begin{proof}
Let $f_G=(U_0,U_1,U_2)$ be a $\gamma_r(G)$-function, $W_1 = U_1\times Y$ and $W_2 = (V(G)\times V_2) \cup (U_2\times Y)$. In order to show that  $f=(W_0,W_1,W_2)$ is a WRDF of $G\Box H$, we differentiate the following two cases for  $(x,y)\in W_0$.

\noindent Case 1: $(x,y)\in V(G)\times (N(V_2)\setminus V_2)$. Since  there exists $y'\in V_2\cap N(y)$, the movement of a guard from $(x,y')$ to $(x,y)$ does not produce undefended vertices.

\noindent Case 2:  $(x,y)\in U_0\times Y$. Since $f_G$ is a $\gamma_r(G)$-function, there exists $x'\in U_1\cup U_2$ such that the movement of a guard from $x'$ to $x$ does not  produce  undefended vertices. Which implies that the movement of a guard from $(x',y)$ to $(x,y)$ does not produce undefended vertices in  $V(G)\times Y$.
\end{proof}

Notice that for any graph with $\gamma_r(H)=2\gamma(H)$,  Theorems \ref{MainUpperBoundWRD-Cartesian} and \ref{AAAA} lead to the same result $\gamma_r(G\Box H) \le 2n(G)\gamma(H)$. In order to show an example where Theorem \ref{AAAA} gives a better result we take $G\cong K_3$ and the graph $H$ shown in Figure \ref{FigWeakRoman2}. In this case, an optimum solution consists of two guards at each vertex of the copy of $K_3$ corresponding to the vertex $v\in V(H)$ of maximum degree   and one guard at each copy of $K_3$ corresponding to the vertices of $H$ nonajacent to $v$. 

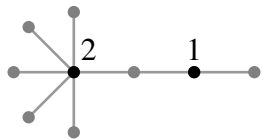
\begin{figure}[htb]
\begin{center}
\begin{tikzpicture}
[line width=1pt, scale=0.8]

\coordinate (V1) at (1,0);
\coordinate (V2) at (2,0);
\coordinate (V3) at (3,0);
\coordinate (V4) at (4,0);
\coordinate (V5) at (0.25,-0.75);
\coordinate (V6) at (0.25,0.75);
\coordinate (V7) at (1,-1);
\coordinate (V8) at (0,0);
\coordinate (V9) at (1,1);

\draw[black!40]  (V1)--(V2)--(V3)--(V4);
\draw[black!40]  (V1)--(V5);
\draw[black!40]  (V1)--(V6);
\draw[black!40]  (V1)--(V7);
\draw[black!40]  (V1)--(V8);
\draw[black!40]  (V1)--(V9);

\foreach \number in {1,...,9}{
\filldraw[gray]  (V\number) circle (0.08cm);
}
\foreach \number in {1,3}{
\filldraw[black]  (V\number) circle (0.08cm);
}

\node [above] at (1.25,0) {$2$};
\node [above] at (3,0) {$1$};
\end{tikzpicture}
\end{center}
\caption{A graph with $\gamma_r(H)=3$, $|Y|=2$ and $\gamma_r(K_3\Box H)=2n(G)|V_2| + |Y|\gamma_r(G)=8$. }
\label{FigWeakRoman2} 
\end{figure}

\vspace{1cc}
\noindent\textbf{Acknowledgment.}  This work has been partially supported by the Spanish Ministry of Economy, Industry and Competitiveness, under the grants MTM2016-78227-C2-1-P and MTM2017-90584-REDT.

\bibliographystyle{elsart-num-sort}

\end{document}